\documentclass[a4paper,10pt]{article}

\usepackage[margin=4cm]{geometry}
\usepackage{subfigure}
\usepackage{color}
\usepackage{mathrsfs}
\usepackage{amsmath,amssymb,latexsym,amsthm}
\usepackage{enumitem}
\usepackage[T1]{fontenc}
\usepackage{lmodern}
\usepackage{microtype}

\usepackage[utf8]{inputenc}
\usepackage{mathtools} 
\usepackage{a4wide,amsmath,amssymb}
\usepackage[toc,page]{appendix}
\usepackage{bbm}
\usepackage{hyperref}
\usepackage{url}
\usepackage{courier}
\usepackage{easyReview}
\theoremstyle{plain}
\newtheorem{theorem}{\textbf{Theorem}}[section]
\newtheorem{proposition}[theorem]{\textbf{Proposition}}
\newtheorem{lemma}[theorem]{\textbf{Lemma}}

\theoremstyle{definition}
\newtheorem{definition}[theorem]{\textbf{Definition}}

\theoremstyle{remark}

\newtheorem{remark}[theorem]{Remark}

\numberwithin{equation}{section}

\def\N{\mathbb{N}}

\def\supp{\textrm{supp}}

\title{Kolmogorov $\varepsilon$-entropy of numerical solutions for scalar conservation laws with convex flux}

\RequirePackage[normalem]{ulem} 
\RequirePackage{color}\definecolor{RED}{rgb}{1,0,0}\definecolor{BLUE}{rgb}{0,0,1} 
\def\sideremark#1{\ifvmode\leavevmode\fi\vadjust{
		\vbox to0pt{\hbox to 0pt{\hskip\hsize\hskip1em
				\vbox{\hsize3cm\tiny\raggedright\pretolerance10000
					\noindent #1\hfill}\hss}\vbox to8pt{\vfil}\vss}}}

\makeatletter
\@namedef{subjclassname@2020}{\textup{2020} Mathematics Subject Classification}
\makeatother
\begin{document}
\author{Fabio Ancona\footnote{Dip. di Matematica “Tullio Levi Civita”, Univ. di Padova, {\textsc ancona@math.unipd.it}} \and Alessio Basti\footnote{Dip. di Ingegneria e Geologia, Univ. ``G. d'Annunzio'' di Chieti-Pescara,
		viale Pindaro 42, 65127 Pescara (Italy), { \textsc alessio.basti@unich.it, fabio.camilli@unich.it.}} \and
 Fabio Camilli$^\dag$}
\maketitle

\begin{abstract}
Building on the information-theoretic perspective of P.~D.~Lax [\textit{Proc.\ Sympos., Math.\ Res.\ Center, Univ.\ Wisconsin}, 1978], we establish a two-sided quantitative compactness estimate for numerical solutions of scalar conservation laws with a uniformly convex flux, expressed in terms of Kolmogorov $\varepsilon$-entropy. We prove that, under specific grid constraints, conservative, monotone finite-difference schemes satisfying a discrete one-sided Lipschitz condition (OSLC) preserve the $1/\varepsilon$ Kolmogorov entropy scaling of the corresponding exact entropy solution set, matching the bounds obtained by De~Lellis and Golse [\textit{Comm.\ Pure Appl.\ Math.}\ \textbf{58} (2005)] and by Ancona, Glass, and Nguyen [\textit{Comm.\ Pure Appl.\ Math.}\ \textbf{65} (2012)]. Specifically, the upper bound follows from the discrete OSLC, while the lower bound relies on a uniform approximation argument on a bounded-variation precursor class. Our results show that prototypical first-order methods are high-resolution in Lax's sense. Finally, we abstract the lower bound mechanism into a general transfer principle, discuss implications for information recovery via post-processing, and indicate directions for future work.
\end{abstract}

\noindent
{\footnotesize \textbf{AMS-Subject Classification:} 35L65, 65M06, 94A17}.\\
{\footnotesize \textbf{Keywords:} Kolmogorov $\varepsilon$-Entropy, Scalar Conservation Laws, Numerical Schemes, Quantitative Compactness}.

\section{Introduction}\label{intro}
\setcounter{equation}{0}

The aim of this work is to develop an information-theoretic framework for the analysis of numerical schemes for conservation laws, and more generally for partial differential equations (PDEs), inspired by the ideas of P.~D.~ Lax \cite{Lax78} (see also \cite{Lax02}). This perspective is intended to run in parallel with, and complement, the classical theory of error estimates for numerical schemes. In particular, in his seminal work, he proposed comparing the finite complexity of the numerical solution set with that of its exact solution counterpart. This approach reveals intrinsic constraints on numerical fidelity: a scheme whose order of complexity is lower than that of the exact solution family at scale $\varepsilon$ cannot, in principle, approximate every element of the family with an high fidelity. To quantify this notion of ``resolution'', Lax employed the Kolmogorov $\varepsilon$-entropy (henceforth simply the $\varepsilon$-entropy), defined as follows.

\begin{definition}[$\varepsilon$-Entropy]
Let $(X,d)$ be a metric space and let $K\subset X$ be totally bounded. For $\varepsilon>0$, denote by $N_\varepsilon(K)$ the minimal number of sets of diameter at most $2\varepsilon$ required to cover $K$. The $\varepsilon$-entropy of $K$ (relative to $X$) is then
\[
\mathcal H_{\varepsilon}(K\mid X)\;:=\;\log_2 N_\varepsilon(K).
\]
We call a cover of $K$ by subsets of $X$ having diameter no larger than $2\varepsilon$
an $\varepsilon$-cover of $K$.
\end{definition}

This notion of the complexity of a set quantifies the number of bits required to represent it at a given resolution scale \cite{KT59}. In \cite{Lax78}, depending on whether numerical schemes preserve the order of magnitude of the $\varepsilon$-entropy associated with the PDE problem, schemes are classified as \textit{high-resolution} or \textit{low-resolution}. We explain this more precisely below. 

Let $\mathcal{D}$ be a set of admissible initial data for the conservation law of interest, and let $S^T: \mathcal{D} \to X$ be the solution operator that maps an initial datum $d \in \mathcal{D}$ to the exact solution at time $T$ in a metric space $(X, \|\cdot\|)$. For a mesh with spacing $\Delta x$ denote by $P_{\Delta x}$ the projection operator that maps initial data to their discrete representations $\mathcal{D}_{\Delta x}$ on the mesh; write $u_0:=P_{\Delta x}(d)$. Let $S^N_{\Delta x}$ denote a numerical scheme that produces a discrete solution at discrete time level $N$ such that $T=N\Delta t$, and let $I$ be a linear interpolation operator such that $I\big(S^N_{\Delta x}(u_0)\big)\in X$.

Let $\delta(\mathcal D)$ be the supremum of the approximation error over the set of initial data:
\begin{equation}\label{eq:delta_def}
\delta(\mathcal D) := \sup_{d \in \mathcal{D}} \big\| S^T(d) - I(S^N_{\Delta x}(u_0)) \big\|.
\end{equation}

We will say that a method is \emph{accurate} if $\delta(\mathcal D)\to 0$ as $\Delta x\to 0$. This is therefore a notion tied to the possibility of reconstructing the exact solutions in the mesh-refinement limit. The information-theoretic notion, by contrast, pertains to the theoretical possibility of an optimal reconstruction of the solution at a fixed spatial resolution $\Delta x$. To explain this, let us define the sets of exact and reconstructed numerical solutions respectively as
\[
{S}^T (\mathcal D) := \{ S^T(d) : d \in \mathcal{D} \},
\]
and
\[
{I}\big({S}_{\Delta x}^{N}(\mathcal D_{\Delta x})\big)  := \{ I(S^N_{\Delta x}(u_0)) : u_0=P_{\Delta x}(d),\ d \in \mathcal{D} \}.
\]

A \textit{high-resolution} scheme is characterized by the ability to preserve the entropy scaling of the exact solution family, i.e. ${I}({S}_{\Delta x}^{N}(\mathcal D_{\Delta x}))$ scales in $\varepsilon$ as ${S}^T (\mathcal D)$. This means that the sets have the same complexity and contain the same amount of information, thereby allowing the error to be reduced (potentially even if the scheme itself is not accurate) by means of information recovery via appropriate post-processing (see Section~4 for more details). Conversely, a \textit{low-resolution} method is one for which the entropy scaling of ${I}({S}_{\Delta x}^{N}(\mathcal D_{\Delta x}))$ is strictly coarser than that of ${S}^T (\mathcal D)$; such methods suffer from unavoidable approximation errors $\delta(\mathcal D)$.

To quantify a lower bound for this error, we now restate, in a slightly different form (see the Appendix for the proof), a fundamental theorem that connects the entropy of the exact solution set and that of the numerical solution set via an inequality. Note that the theorem utilizes $\delta(\mathcal D)$ exactly as defined in \eqref{eq:delta_def}:

\begin{theorem}[Lax's Inequality for $\varepsilon$-Entropy]\label{lax_thm}
Under the definitions above, the following inequality holds between the $\varepsilon$-entropy of the exact solution set and that of the numerical reconstruction set:
\begin{equation}\label{eq:lax_theorem}
\mathcal{H}_{2\delta(\mathcal D)}\big({S}^T(\mathcal D)\mid X\big)
\le 
\mathcal{H}_{\delta(\mathcal D)}\big({I}({S}_{\Delta x}^{N}(\mathcal D_{\Delta x})) \mid X\big).
\end{equation}
\end{theorem}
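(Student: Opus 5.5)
Write $\delta:=\delta(\mathcal D)$. The plan is to push a cover of the numerical set forward to the exact set, enlarging each covering set by $\delta$ via the triangle inequality. We may assume $\delta>0$ and that the right-hand side of \eqref{eq:lax_theorem} is finite; otherwise there is nothing to prove, or the statement degenerates.

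\textbf{Step 1: a $\delta$-cover of the numerical set.} Set $M:=N_{\delta}\big(I(S^N_{\Delta x}(\mathcal D_{\Delta x}))\big)$. Choose sets $A_1,\dots,A_M\subset X$, each of diameter at most $2\delta$, whose union contains $I(S^N_{\Delta x}(\mathcal D_{\Delta x}))$.

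\textbf{Step 2: the corresponding exact solutions.} For each $i$, define
\[
B_i:=\{S^T(d):\ d\in\mathcal D,\ I(S^N_{\Delta x}(P_{\Delta x}d))\in A_i\}.
\]
Every $d\in\mathcal D$ has its numerical reconstruction in some $A_i$, so the sets $B_1,\dots,B_M$ cover $S^T(\mathcal D)$.

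\textbf{Step 3: diameter estimate.} Take two elements $S^T(d)$ and $S^T(d')$ of $B_i$, and write $w=I(S^N_{\Delta x}(P_{\Delta x}d))$ and $w'=I(S^N_{\Delta x}(P_{\Delta x}d'))$. By the triangle inequality and the definition \eqref{eq:delta_def},
\[
\|S^T(d)-S^T(d')\|\le \|S^T(d)-w\|+\|w-w'\|+\|w'-S^T(d')\|\le \delta+2\delta+\delta=4\delta .
\]
So $\operatorname{diam}B_i\le 4\delta=2\cdot(2\delta)$, and $\{B_i\}$ is a $2\delta$-cover of $S^T(\mathcal D)$ with $M$ sets.

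\textbf{Step 4: conclusion.} Hence $N_{2\delta}(S^T(\mathcal D))\le M$. Taking $\log_2$ of both sides gives \eqref{eq:lax_theorem}.

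\textbf{Main point of care.} The substantive step is Step 3, and it relies on $\delta$ being the supremum over all $d$. Two subtleties need attention. First, if $\delta$ is a supremum that is not attained, the bound $\le\delta$ still holds for every $d$. Second, the exact solution set must be totally bounded for $N_{2\delta}$ to make sense. This follows from the argument itself: the exact set is covered by finitely many sets of diameter $4\delta$, so $N_{2\delta}$ is finite. If total boundedness is needed at every scale, we instead interpret $N_{2\delta}$ directly as a minimal cardinality, possibly infinite.
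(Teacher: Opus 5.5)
Your proof is correct and uses the same basic mechanism as the paper. In both, a cover of the numerical set is transferred to $S^T(\mathcal D)$ using the uniform bound $\|S^T(d)-I(S^N_{\Delta x}(P_{\Delta x}d))\|\le\delta$ (with $\delta=\delta(\mathcal D)$) and the triangle inequality. The execution differs, though.

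The paper takes balls of radius $\delta$ with centers $u_1,\dots,u_{\tilde N}$ covering the numerical set. A two-term triangle inequality then shows that the balls of radius $2\delta$ around the same centers cover $S^T(\mathcal D)$. You instead pull the covering sets back through the correspondence $d\mapsto I(S^N_{\Delta x}(P_{\Delta x}d))$, and bound $\operatorname{diam}B_i\le 4\delta$ by a three-term triangle inequality.

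Your route has the advantage of working verbatim with the definition the paper actually states, namely covers by sets of diameter at most $2\varepsilon$. The paper's first step, producing balls of radius $\delta$ from a minimal $\delta$-cover, is justified only under the ball-based convention. In $L^1$ a set of diameter $2\delta$ need not lie in any ball of radius $\delta$. For example, take the four step functions on $[0,3)$ with values $(1,1,1)$, $(1,-1,-1)$, $(-1,1,-1)$, $(-1,-1,1)$ on the three unit cells. They are pairwise at $L^1$-distance $4$, yet every $c\in L^1$ is at distance at least $3$ from one of them. Using instead balls of radius $2\delta$ around a point of each covering set would only give $\mathcal H_{3\delta}$ on the left-hand side.

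Conversely, the paper's version is the natural one under the radius-$\varepsilon$ convention, and it yields a cover with explicitly prescribed centers.
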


Therefore, for low-resolution methods, establishing a lower bound for the left-hand side and an upper bound for the right-hand side yields a lower bound on the error $\delta(\mathcal D)$. Indeed, assume, e.g., that for sufficiently small $\varepsilon$
\[
\mathcal H_{\varepsilon}(S^T(\mathcal D)\mid X)
\geq C_1 \varepsilon^{-\alpha},
\qquad
\mathcal H_{\varepsilon}(I(S^N_{\Delta x}(\mathcal D_{\Delta x}))\mid X)
\leq C_2 \,\varepsilon^{-\beta},
\]
with $\alpha>\beta>0$. Then \eqref{eq:lax_theorem} implies
\begin{equation}
\delta(\mathcal D)
\ge
\left(\frac{C_1}{2^\alpha C_2}\right)^{1/(\alpha-\beta)}.
\end{equation}
In this sense, whenever the numerical entropy grows strictly more slowly
than the exact one, the loss of metric complexity translates into an
intrinsic obstruction to uniform approximation.
Therefor, it is  fundamental to obtain estimates for both the $\varepsilon$-entropy of the exact solution set and that of the numerical reconstruction set. In the past two decades a substantial body of work has produced quantitative estimates of the $\varepsilon$-entropy for $\mathcal{S}^T (\mathcal D)$ in different cases. For instance, in the case of a scalar conservation law with convex flux in one space dimension, De~Lellis and Golse \cite{DLG}, building on Lax's compactness result \cite{Lax54} for the entropy semigroup in $L^1_{\mathrm{loc}}$, obtained upper bounds on the $\varepsilon$-entropy in $L^1_{\mathrm{loc}}$ of the set of entropy solutions at time $t>0$ generated by bounded sets of initial data in $L^1$. Complementing these upper bounds, Ancona, Glass and Nguyen \cite{AON1} derive matching lower bounds of order $1/\varepsilon$ and then extend the quantitative compactness estimates to other cases \cite{AON2,AON3}. These estimates therefore provide the natural benchmark against which numerical approximations should be evaluated.

To the best of our knowledge, no result in the literature currently provides estimates for $\varepsilon$-entropy of the numerical reconstruction set in any setting. Thus, no direct quantitative comparison with the $\varepsilon$-entropy of an exact solution family has yet been obtained. The present paper takes a first step toward filling this gap, while implicitly addressing the following concrete questions:
\begin{enumerate}[label=\textbf{Q\arabic*}]
    \item \label{Q1} In the case of conservation laws with convex flux, under what hypotheses does a numerical solution set inherit the same $\varepsilon$-entropy scaling as the exact entropy solution set?    
    \item \label{Q2} What is the relationship between the approximation error \eqref{eq:delta_def} and the $\varepsilon$-entropy resolution and does the former play a role in characterizing the latter?
\item \label{Q3}  What implications does this notion of resolution have for the design, analysis, and error control of numerical methods?
\end{enumerate}

\section{Main results}\label{main}
\setcounter{equation}{0}

We consider the scalar conservation law in one space dimension
\begin{equation}\label{eq:cl}
u_t + f(u)_x = 0,\qquad t\ge 0,\; x\in\mathbb{R},
\end{equation}
under the standing assumptions that the flux $f:\mathbb{R}\to\mathbb{R}$ is 
twice continuously differentiable
and uniformly strictly convex,
\begin{equation}\label{eq:flux-assump}
f''(u)\ge c>0\quad\text{for all }u\in\mathbb{R},
\end{equation}
and that the normalization
\begin{equation}\label{eq:zero-speed}
f'(0)=0
\end{equation}
holds. For every $d\in L^1(\mathbb{R})\cap L^\infty(\mathbb{R})$ we denote by $S^T(d)$ the Kruzhkov entropy admissible weak solution of \eqref{eq:cl} at time $T\ge0$ (see, e.g., \cite{Dafermos:Book}). 

It is classical that $S^T$ is compact (for fixed $T>0$) from $L^1(\mathbb{R})$ into $L^1_{\mathrm{loc}}(\mathbb{R})$. Quantitative two-sided bounds for the $\varepsilon$-entropy of $S^T$ of order $1/\varepsilon$, for compact classes of initial data \begin{equation}\label{eq:continuous-class}
\mathcal D_{[L,m,M]}:=\Big\{d\in L^1(\mathbb{R})\cap L^\infty(\mathbb{R}):\ \supp(d)\subset[-L,L],\ \|d\|_{L^1}\le m,\ \|d\|_{L^\infty}\le M\Big\},
\end{equation}
have been obtained in earlier works; see in particular \cite{DLG,AON1}. 

The purpose of the present work is to provide quantitative bounds for the numerical solution family produced by finite-difference schemes. We fix a uniform spatial mesh size $\Delta x>0$ and a time step $\Delta t>0$ with $\lambda=\Delta t/\Delta x$. We set $x_j=j\Delta x$ and $t^n=n\Delta t$. We then consider a finite-difference scheme on the infinite grid based on a numerical flux $g$:
\begin{equation}\label{eq:fd}
u_j^{n+1}=u_j^n-\lambda\bigl[g(u_{j+1}^n,u_j^n)-g(u_j^n,u_{j-1}^n)\bigr],\qquad n \geq 0, \quad j\in\mathbb{Z}.
\end{equation}

We define the corresponding class of admissible discrete initial data via the standard cell-averaging projection operator $P_{\Delta x}$:
\begin{equation}\label{eq:discrete-class}
\mathcal D_{\Delta x} := \left\{ u_0 = P_{\Delta x}(d) : \ \left(P_{\Delta x}(d)\right)_j = \frac{1}{\Delta x} \int_{(j-1/2)\Delta x}^{(j+1/2)\Delta x} d(x) \, dx, \ d \in \mathcal D_{[L,m,M]} \right\}.
\end{equation}
We denote by $S_{\Delta x}^n$ the discrete evolution operator that maps $u^0$ to $u^n$. 
Since the continuous functions in $\mathcal D_{[L,m,M]}$ are supported in $[-L,L]$, their discrete projections $u_0$ have compact support; specifically, $(u_0)_j = 0$ for $|x_j| > L$. By the finite speed of propagation of the explicit scheme \eqref{eq:fd}, it is inherently guaranteed that $u^n = S_{\Delta x}^n(u_0)$ remains compactly supported for any finite time step $n$. In particular, the support of $u^n$ is contained in $[-L^n,L^n]$ with $L^n=L+n\Delta x$.

For our analysis, we fix a target index $N\in\mathbb{N}$ and set the final evaluation time $T := t^N = N\Delta t$, alongside the final support bound $L^N = L+N\Delta x$.

The following structural assumptions on the numerical scheme will be in force throughout the paper:
\begin{enumerate}[label=(\textbf{H\arabic*})]
    \item \textit{Consistency, conservativity, and monotonicity.}\label{hyp:general_cond}
      The finite-difference scheme \eqref{eq:fd} is conservative and consistent. The numerical flux
    $g:\mathbb{R}^2\to\mathbb{R}$ is locally Lipschitz continuous and monotone, in the sense that
    \[
        \partial_1 g \le 0, \qquad \partial_2 g \ge 0
    \]
    at every point where these derivatives exist, and the
Courant--Friedrichs--Lewy (CFL) condition holds:
\begin{equation}
    \lambda \sup_{\|u\|_{L^\infty}\le M}|f'(u)|\le 1,
    \qquad
    \lambda=\frac{\Delta t}{\Delta x}.
\end{equation}

    \item \textit{Discrete one-sided Lipschitz condition (OSLC).}\label{hyp:oslc}
    The forward explicit scheme satisfies a discrete OSLC estimate \cite{BO}: there exists $\beta>0$ such that,
    for
    \[
        \ell^+(u^n):= \max_j \frac{(u_{j+1}^n-u_j^n)_+}{\Delta x},
    \]
    one has
    \[
        \ell^+(u^n)\le \frac{1}{\beta\, t^n}
        \qquad\text{for every } n \text{ such that } t^n>0.
    \]
\end{enumerate}

Under these assumptions, classical results for monotone approximation schemes for scalar
conservation laws (see, e.g., \cite{kuz, cm}) imply that, for every initial datum
$d\in BV(\mathbb{R})$ with bounded support and $TV(d)\le C_{d}$, the reconstructed
numerical solution converges in $L^1(\mathbb{R})$ to the entropy solution at time $T=t^N$:
\begin{equation}\label{eq:convergence}
    \lim_{\Delta x\to 0}
    \bigl\| I(S_{\Delta x}^N(u_0)) - S^T(d)\bigr\|_{L^1(\mathbb{R})}=0.
\end{equation}
Moreover, the corresponding quantitative error bound depends only on $\Delta x$, $T$,
$C_{d}$, and the flux $f$.

For initial data in $BV(\mathbb{R})$, Kuznetsov's estimate \cite{kuz,hr} provides a worst-case fractional error bound of the form $\mathcal{O}(\Delta x^{1/2})$. Explicit knowledge of this convergence bound is mathematically pivotal to our lower estimate analysis, as it strictly defines the minimum scale $\varepsilon$ at which the numerical solution set can successfully emulate the information-theoretic complexity of the exact solution set.

Our main result establishes two-sided $\varepsilon$-entropy bounds of order $1/\varepsilon$ for the solution set $I(S_{\Delta x}^{N}(\mathcal D_{\Delta x}))$ at time $T$, matching up to multiplicative constants. Furthermore, these estimates mirror the theoretical bounds established for the underlying PDE problem. Our estimates rely on two distinct mechanisms. 

The upper bound is established relying on the intrinsic regularizing properties of the numerical operator. Specifically, we exploit the discrete OSLC and the stability of the scheme to map the interpolated numerical solutions into a class of bounded, non-decreasing functions, whose metric entropy can be explicitly estimated. 

Conversely, the strategy for the lower bound is to demonstrate that the numerical solution set contains an approximation of a specific target set included in the solution space of the PDE problem. If the forward numerical error on exact precursors of this target set is uniformly bounded, the discrete solutions must inherit its optimal $\varepsilon$-entropy scaling. We construct this lower bound target by taking advantage of the approach of \cite{AON1}. We first fix an amplitude parameter $h$ related to the PDE, satisfying
\begin{equation}\label{eq:h_param}
0 < h \le \min\left\{M, \frac{m}{2L}, \frac{L}{8T|f''(0)|}\right\},
\end{equation}
and define the corresponding shrunken support length 
\begin{equation}
\label{eq:suppLT}
L_T := L - 2T|f''(0)|\,h.
\end{equation}
Let \begin{equation}
\begin{split}
\label{eq:Adef}
    \mathcal{A}_{[L_T, Lh, h, (2T|f''(0)|)^{-1}]} := \bigg\{ v \in BV(\mathbb{R})\;\bigg|\; & \supp(v) \subset [-L_T, L_T], \ \|v\|_{L^1(\mathbb{R})} \le Lh,  \|v\|_{L^\infty(\mathbb{R})} \le h, \\
    & \ Dv \le (2T|f''(0)|)^{-1} \text{ in the sense of measures} \bigg\}
\end{split}
\end{equation}
be the admissible target space. As shown in \cite{AON1}, this specific set exhibits a $1/\varepsilon$ entropy scaling. For any target profile $v$ in this set, we define an \textit{exact precursor} as an initial datum $d \in \mathcal D_{[L,m,M]}$ whose evolution reaches $v$ at the target time $T$, namely $S^T(d) = v$. As we will establish in Lemma \ref{lem:uniform_tv_precursors}, all such precursors share a uniform total variation bound $C_{BV}$
dependning only on $L,M,T$ and $f''(0)$. This structural property is the crucial link: it guarantees that the discrete operator $S^N_{\Delta x}$ approximates the exact flow uniformly over this entire subset of precursors. We denote this bounded-variation subset of initial data as 
\begin{equation}
\label{eq:DLmMC-def}
\mathcal{D}_{[L,m,M,C_{BV}]}:=
\Big\{d \in \mathcal D_{[L,m,M]}
 \,:\, 
  TV(d) \le C_{BV} \Big\},
\end{equation}
and define the maximum forward numerical error over this class as:
\begin{equation}
\delta(\mathcal{D}_{[L,m,M,C_{BV}]}) := 
\sup\Big\{ \big\| S^T(d) - I(S^N_{\Delta x}(u_0)) \big\|_{L^1(\mathbb{R})} \,:\, d \in \mathcal{D}_{[L,m,M,C_{BV}]} \Big\}.
\end{equation}
This quantity represents the approximation error introduced in \eqref{eq:delta_def}, but specifically evaluated over the subset $\mathcal{D}_{[L,m,M,C_{BV}]}$ of initial data. By the assumptions on the scheme, and Kuznetsov's estimate \cite{kuz,hr}, this supremum is strictly finite, and it vanishes as $\Delta x \to 0$. 

We can now state the two-sided entropy estimate as follows:
\begin{theorem}[$\varepsilon$-entropy bounds for numerical solutions]
\label{thm:main}
Let $f:\mathbb{R}\to\mathbb{R}$ be a twice continuously differentiable map satisfying
\eqref{eq:flux-assump} and \eqref{eq:zero-speed}.
Assume that the finite-difference scheme \eqref{eq:fd}, with spatial mesh size $\Delta x>0$
and time step $\Delta t$, fulfills the structural conditions
\ref{hyp:general_cond}--\ref{hyp:oslc}.
Given $L,m,M>0$, $N\in\mathbb{N}$, and $\alpha\in(0,1]$, define
\[
L^N := L + N\Delta x,
\qquad
t^N := N\Delta t.
\]
Then the set of piecewise-linear reconstructed numerical solutions satisfies:
\begin{enumerate}
\item \textbf{Upper bound.}
For every $\varepsilon$ such that
\[
0 < \varepsilon \le
\frac{2L^N+\Delta x}{6}
\left(
\frac{2L^N+\Delta x}{\beta t^N}
+ \sqrt{\frac{2m}{\beta t^N}}
\right),
\]
one has
\begin{equation}\label{u_bound}
\mathcal{H}_{\varepsilon}\!\left(
I\big(S^N_{\Delta x}(\mathcal D_{\Delta x})\big)
\,\middle|\, L^1(\mathbb{R})
\right)
\le
\frac{\Gamma^+}{\varepsilon},
\end{equation}
where
\[
\Gamma^+ :=
\frac{4(2L^N + \Delta x)^2}{\beta t^N}
+
4(2L^N + \Delta x)\sqrt{\frac{2m}{\beta t^N}}.
\]
\item \textbf{Lower bound.}
For every $\varepsilon$ such that
\[
\frac{\delta(\mathcal D_{[L,m,M,C_{BV}]})}{\alpha}
\le \varepsilon \le
\frac{L}{8(1+2\alpha)}
\min\!\left\{
M,\;
\frac{m}{2L},\;
\frac{L}{8 t^N |f''(0)|}
\right\},
\]
one has
\begin{equation}\label{l_bound}
\mathcal{H}_{\varepsilon}\!\left(
I\big(S^N_{\Delta x}(\mathcal D_{\Delta x})\big)
\,\middle|\, L^1(\mathbb{R})
\right)
\ge
\frac{\Gamma^-}{\varepsilon},
\end{equation}
where
\[
\Gamma^- :=
\frac{L^2}{48 \ln(2)\, t^N |f''(0)|\,(1+2\alpha)}.
\]
\end{enumerate}
\end{theorem}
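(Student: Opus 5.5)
We prove the two bounds separately, following the two mechanisms described before the statement.

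\emph{Upper bound.} Fix $u_0\in\mathcal D_{\Delta x}$ and let $w:=I(S^N_{\Delta x}(u_0))$. First we record three properties of $w$.
\begin{itemize}
\item[(i)] Since $u^N$ vanishes for $|x_j|>L^N$, the piecewise-linear reconstruction $w$ is supported in an interval of length $2L^N+\Delta x$.
\item[(ii)] Conservativity and the $L^1$-contraction of monotone schemes give $\|w\|_{L^1}\le m$ (up to the harmless reconstruction effect).
\item[(iii)] Hypothesis \ref{hyp:oslc} bounds every slope of the linear interpolant from above by $1/(\beta t^N)$. Hence $w(x)-x/(\beta t^N)$ is non-increasing on its support.
\end{itemize}
Next we combine (i)--(iii) into an $L^\infty$ bound. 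Because $w$ vanishes at the endpoints of its support, $w$ lies below a line of slope $1/(\beta t^N)$. A standard argument combining the OSLC with the $L^1$ bound (a positive peak of height $H$ forces a triangle of area at least $H^2\beta t^N/2$ to its left) gives
\[
\|w\|_{L^\infty}\le \frac{2L^N+\Delta x}{\beta t^N}+\sqrt{\frac{2m}{\beta t^N}}.
\]
We then write $w=a-b$ with $a(x):=x/(\beta t^N)$ restricted to the support and $b$ monotone. Both $a$ and $b$ are bounded monotone functions on an interval of length $\ell:=2L^N+\Delta x$, with range controlled by the display above. The $\varepsilon$-entropy in $L^1$ of monotone functions on an interval of length $\ell$ with range $R$ is at most a constant times $\ell R/\varepsilon$ for $\varepsilon\lesssim \ell R$; this is the Lorentz/De~Lellis--Golse estimate. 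Since $a$ is fixed, only $b$ contributes entropy. Inserting $R$ produces $\Gamma^+/\varepsilon$ with the stated constant, and the admissible range $\varepsilon\le \ell R/6$ is exactly the one in the statement. The only real work here is tracking the constants through the monotone-class estimate.

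\emph{Lower bound.} Set $h:=(1+2\alpha)\varepsilon\cdot 8/L$; this is admissible in \eqref{eq:h_param} precisely by the upper restriction on $\varepsilon$.
\begin{itemize}
\item[1.] By \cite{AON1}, every $v\in\mathcal A:=\mathcal{A}_{[L_T,Lh,h,(2T|f''(0)|)^{-1}]}$ is reachable, i.e.\ $v=S^T(d)$ for some $d\in\mathcal D_{[L,m,M]}$, obtained via the backward construction along characteristics.
\item[2.] By Lemma \ref{lem:uniform_tv_precursors}, such a $d$ can be chosen with $TV(d)\le C_{BV}$, so $d\in\mathcal D_{[L,m,M,C_{BV}]}$.
\item[3.] Also from \cite{AON1}, $\mathcal A$ contains a $(1+2\alpha)\varepsilon$-separated family $\{v_k\}$ in $L^1$ of cardinality $2^{K}$ with $K\ge \Gamma^-/\varepsilon$. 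One takes superpositions of $\sim L^2/(T|f''(0)|\,\varepsilon')$ admissible sawtooth profiles with $\varepsilon'=(1+2\alpha)\varepsilon$ and uses the Varshamov--Gilbert bound; this is where the factor $1/\ln 2$ and the constant $48$ come from.
\end{itemize}
For each $k$, let $w_k:=I(S^N_{\Delta x}(P_{\Delta x}d_k))$. By definition of $\delta$, $\|w_k-v_k\|_{L^1}\le\delta(\mathcal D_{[L,m,M,C_{BV}]})\le\alpha\varepsilon$. The triangle inequality then gives
\[
\|w_k-w_l\|_{L^1}\ge (1+2\alpha)\varepsilon-2\alpha\varepsilon=\varepsilon .
\]
Consequently a set of diameter at most $2\varepsilon$ can contain at most a bounded number of the $w_k$. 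To get exactly one, we use strict separation, i.e.\ the packing-versus-covering comparison $N_\varepsilon\ge M_{2\varepsilon}$ adjusted by the constants built into the choice of $h$. Therefore
\[
N_\varepsilon\big(I(S^N_{\Delta x}(\mathcal D_{\Delta x}))\big)\ge 2^K,
\]
which is \eqref{l_bound}.

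The main obstacle is step 3: reproducing the explicit packing constant of \cite{AON1} with the separation inflated by $(1+2\alpha)$. One must also make sure the packing-to-covering conversion, with covering sets of diameter $2\varepsilon$, does not lose a factor of $2$. If it does, I would instead separate the family at scale $2(1+2\alpha)\varepsilon$ and rescale $h$ accordingly, absorbing the loss into $\Gamma^-$.
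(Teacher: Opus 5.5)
Your upper-bound argument is the paper's: subtract a fixed ramp of slope $1/(\beta t^N)$, use that this translation is an $L^1$-isometry, bound the oscillation of the nondecreasing remainder, and apply Lemma~\ref{lem:DLG}. One correction there. The oscillation you need is that of the monotone part $b$, namely its value at the right endpoint minus its value at the left endpoint. This is at most $\ell/(\beta t^N)+\|w\|_{L^\infty}\le V$, using $\|w\|_{L^\infty}\le\sqrt{2m/(\beta t^N)}$ from Lemma~\ref{lem:BV_Linfty}. If you bound the ranges of $a$ and $w$ separately, you get $\ell/(\beta t^N)+2\|w\|_{L^\infty}$ instead, which spoils $\Gamma^+$.

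The lower bound has a genuine gap. Your target family is only $(1+2\alpha)\varepsilon$-separated. After each element is moved by at most $\alpha\varepsilon$, the discrete realizations are merely $\varepsilon$-apart, so nothing prevents several of them from lying in one covering set of diameter $2\varepsilon$. The proposed repair does not close this:
\begin{itemize}
\item The comparison $N_\varepsilon\ge M_{2\varepsilon}$ needs pairwise distances exceeding $2\varepsilon$, which you do not have.
\item There is no reason for the number of $w_k$ in a set of diameter $2\varepsilon$ to be bounded independently of $\varepsilon$; in Hamming-cube-type families like the one behind Lemma~\ref{lem:continuous_separated}, it can grow exponentially in $1/\varepsilon$.
\item Even a constant bound $C$ would only give $\Gamma^-/\varepsilon-\log_2 C$.
\item The choice of $h$ cannot absorb anything, since $h$ enters only the admissible range, not the constant $\tilde\Gamma^-$.
\end{itemize}
Your fallback (separate at $2(1+2\alpha)\varepsilon$ and absorb the loss) would, with your convention, halve both $\Gamma^-$ and the upper end of the $\varepsilon$-range. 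That proves a weaker statement.

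The missing observation is that the factor $2$ is already built into the continuous packing. Lemma~\ref{lem:continuous_separated} gives, for every $\eta\le Lh/8$, a family $F_\eta$ in the target class with pairwise $L^1$-distances \emph{strictly larger than} $2\eta$ and $|F_\eta|\ge 2^{\tilde\Gamma^-/\eta}$. Apply it at $\eta=\tilde\varepsilon=(1+2\alpha)\varepsilon$ and use $\delta(\mathcal D_{[L,m,M,C_{BV}]})\le\alpha\varepsilon$. The realizations then satisfy $\|v_{\Delta x}-w_{\Delta x}\|_{L^1}>2(1+2\alpha)\varepsilon-2\alpha\varepsilon=2(1+\alpha)\varepsilon>2\varepsilon$. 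So each set of diameter at most $2\varepsilon$ contains at most one of them, and $N_\varepsilon\ge 2^{\tilde\Gamma^-/((1+2\alpha)\varepsilon)}=2^{\Gamma^-/\varepsilon}$, with no loss in either the constant or the range; this is Proposition~\ref{prop:lower_estimate}.

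A related point: the paper fixes $h$ once, and the theorem's upper endpoint is $Lh/(8(1+2\alpha))$ for the largest admissible $h$. You instead set $h=8(1+2\alpha)\varepsilon/L$. Since $C_{BV}$ in Lemma~\ref{lem:uniform_tv_precursors} may depend on $h$, an $\varepsilon$-dependent $h$ makes the threshold $\delta(\mathcal D_{[L,m,M,C_{BV}]})$ depend on $\varepsilon$, unless you check that $C_{BV}$ is uniform in $h$.
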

\begin{remark}
$\alpha \in (0,1]$ acts as a tolerance that manages a trade-off between the sharpness of the entropy lower bound and the admissible range of observation scales. Because the discrete scheme intrinsically introduces a forward error $\delta(\mathcal{D}_{[L,m,M,C_{BV}]})$ on the associated precursor class, it is impossible to perfectly resolve the metric complexity at scales finer than this numerical diffusion. The condition $\varepsilon \ge \delta(\mathcal{D}_{[L,m,M,C_{BV}]})/\alpha$ explicitly quantifies this limit. Consequently, $\alpha$ governs a structural tension: choosing a small $\alpha$ (strict tolerance) yields a sharper continuous-like lower bound constant $\Gamma^-$ (as $1+2\alpha \to 1$), but severely restricts the validity of the estimate. Conversely, relaxing the tolerance by choosing a larger $\alpha$ allows one to probe finer scales closer to the absolute numerical error floor, but comes at the cost of a slightly penalized entropy constant $\Gamma^-$.
\end{remark}

Theorem \ref{thm:main} provides a definitive, affirmative answer to \ref{Q1}. We demonstrate that numerical solutions can inherit the optimal $1/\varepsilon$ entropy scaling of the exact flow, provided the underlying method satisfies \ref{hyp:general_cond}--\ref{hyp:oslc}. Standard conservative monotone schemes, such as Lax-Friedrichs and Godunov methods, naturally fulfill these structural hypotheses \cite{Tadmor12,BO}. In Section~\ref{discussion}, we compare the constants of the continuous-time setting with those derived for the numerical schemes, discussing how the specific OSLC constants of different numerical methods explicitly modify the upper bound. Addressing \ref{Q2}, Theorem \ref{thm:main} exposes a fundamental, quantitative relationship between the information-theoretic resolution limit and the approximation error $\delta(\mathcal{D}_{[L,m,M,C_{BV}]})$. The theorem demonstrates that the complexity of the exact entropy solution set is successfully emulated by the discrete operator only at observation scales $\varepsilon$ strictly bounded from below. Regarding \ref{Q3}, in Section~\ref{discussion}, we distill the lower bound mechanism into a general transfer principle useful for other problems, and finally discuss the implications of this information-theoretic framework for the design and analysis of numerical methods.


\section{Proof of Theorem \ref{thm:main}}

For clarity of exposition, we divide the proof of Theorem~\ref{thm:main} into two independent propositions. The first (Proposition~\ref{prop:upper_estimate}) establishes the $\varepsilon$-entropy upper bound \eqref{u_bound} by exploiting the regularizing effect of the discrete OSLC. The second (Proposition~\ref{prop:lower_estimate}) derives the corresponding lower bound \eqref{l_bound} by demonstrating that the numerical operator uniformly approximates a strictly separated target family, thereby inheriting its metric complexity. Because these two mechanisms operate on fundamentally different principles, they impose distinct restrictions on the valid observation scales $\varepsilon$. The global two-sided estimate presented in Theorem~\ref{thm:main} is ultimately obtained by intersecting these respective validity ranges.

\subsection*{Upper estimate} 

For the proof we need two preliminary lemmas.

\begin{lemma}[$\varepsilon$-entropy of non-decreasing bounded functions; Lemma 3.1 \cite{DLG}]\label{lem:DLG}
For $L>0$ and $V>0$ set
\begin{equation}\label{eq:I_LV}
\mathcal I_{L,V}:=\{w:[0,L]\to[0,V]\ :\ w \ \text{is nondecreasing}\}.
\end{equation}
Then, for $0<\varepsilon\le \dfrac{L V}{6}$,
\begin{equation}\label{eq:DLG_entropy}
\mathcal H_{\varepsilon}\big(\mathcal I_{L,V}\ \big|\ L^1([0,L])\big)
\le \frac{4L V}{\varepsilon}.
\end{equation}
\end{lemma}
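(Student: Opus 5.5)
The plan is a direct covering construction in the spirit of Kolmogorov--Tikhomirov, with the exact constants tuned afterwards. The target is an explicit $\varepsilon$-cover of $\mathcal I_{L,V}$ by $L^1([0,L])$-balls of radius $\varepsilon$ (diameter $\le 2\varepsilon$) whose cardinality is at most $2^{4LV/\varepsilon}$. Rescaling $x\mapsto x/L$ and $w\mapsto w/V$ maps $\mathcal I_{L,V}$ onto $\mathcal I_{1,1}$ and multiplies $L^1$ distances by $LV$. It therefore suffices to treat $L=V=1$ with $\eta:=\varepsilon/(LV)\le 1/6$ and to show $\log_2 N_\eta(\mathcal I_{1,1})\le 4/\eta$.

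\textbf{Step 1 (discretization).} Choose an integer $n\approx c/\eta$ and split $[0,1]$ into $n$ intervals $J_i$ of length $1/n$. Quantize the range $[0,1]$ into $n$ levels of height $1/n$. To each nondecreasing $w$, associate the piecewise constant nondecreasing function $\tilde w$ that equals, on $J_i$, the quantized value $\lfloor n\,w(x_i)\rfloor/n$ at the left endpoint $x_i$ of $J_i$.

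\textbf{Step 2 (error estimate).} Bound the $L^1$ error by monotonicity. On $J_i$ one has $|w-w(x_i)|\le w(x_{i+1})-w(x_i)$, so the sampling error is at most $\frac1n\sum_i (w(x_{i+1})-w(x_i))\le 1/n$. The quantization error is at most $1/n$. Hence $\|w-\tilde w\|_{L^1}\le 2/n$, and $n\ge 2/\eta$ makes every element lie within $\eta$ of some $\tilde w$, i.e.\ in the ball of radius $\eta$ around it.

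\textbf{Step 3 (counting).} The functions $\tilde w$ correspond to nondecreasing maps $\{0,\dots,n-1\}\to\{0,\dots,n\}$. There are $\binom{2n}{n}\le 4^n$ of them, giving $\log_2 N\le 2n$.

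\textbf{Step 4 (constants).} With $n=\lceil 2/\eta\rceil$ we get $\log_2 N\le 2(2/\eta+1)$. Showing this is $\le 4/\eta$, or tightening it, is where the hypothesis $\eta\le 1/6$ must be used, and I expect this constant bookkeeping to be the main obstacle. The naive count gives $4/\eta+2$. The first thing I would try is to exploit the freedom of taking unequal numbers of horizontal and vertical cells, $n_x$ and $n_v$ with $1/n_x+1/n_v\le\eta$, and to center each quantized level at the midpoint of its bin so that the quantization error halves to $1/(2n_v)$. Then $\log_2\binom{n_x+n_v}{n_x}\le n_x+n_v$, and with the errors reduced to $1/(2n_x)+1/(2n_v)$ the choice $n_x=n_v=\lceil 1/\eta\rceil$ gives $2\lceil1/\eta\rceil\le 2/\eta+2\le 4/\eta$ once $\eta\le1$. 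For the sampling error I would likewise sample at the midpoint of each $J_i$, or use the average of $w$ at the endpoints, to obtain the factor $1/2$.

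The restriction $\varepsilon\le LV/6$ then ensures that the integer rounding is absorbed. Undoing the rescaling gives \eqref{eq:DLG_entropy}.
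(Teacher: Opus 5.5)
Your proof is correct. The paper gives no argument for this lemma; it imports it from De~Lellis--Golse. So you are supplying a self-contained proof, by the classical Kolmogorov--Tikhomirov-type grid discretization and monotone-staircase count, of a result the paper uses as a black box.

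The one fix is presentational. Steps 1--3 as written (left-endpoint sampling, floor quantization, error $2/n$, count $\binom{2n}{n}$) only give $4/\eta+2$, as you noticed, so they do not close. Your refined Step 4 should therefore be the proof itself:
\begin{itemize}
\item Rescale to $L=V=1$ with $\eta=\varepsilon/(LV)$, and take $n=\lceil 1/\eta\rceil$ cells $J_i=[x_i,x_{i+1}]$.
\item On $J_i$, replace $w$ by its value at the midpoint of $J_i$, or by $\tfrac12\bigl(w(x_i)+w(x_{i+1})\bigr)$. By monotonicity this costs at most $\tfrac{1}{2n}\sum_i\bigl(w(x_{i+1})-w(x_i)\bigr)\le \tfrac{1}{2n}$ in $L^1$.
\item Round the resulting nondecreasing sequence to the centre of its bin among $n$ bins of height $1/n$. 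This costs at most $\tfrac{1}{2n}$ and preserves monotonicity.
\item Every $w$ then lies in a closed $L^1$-ball of radius $1/n\le\eta$, a set of diameter at most $2\eta$. Its centre is one of at most $\binom{2n-1}{n}\le 2^{2n-1}$ nondecreasing step functions; your $\binom{n_x+n_v}{n_x}$ is a harmless overcount.
\item Hence $\mathcal H_\eta\bigl(\mathcal I_{1,1}\mid L^1([0,1])\bigr)\le 2/\eta+1\le 4/\eta$. Undoing the scaling, which multiplies $L^1$ distances by $LV$, gives the lemma.
\end{itemize}

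This refined argument works for every $\varepsilon\le LV$, so the hypothesis $\varepsilon\le LV/6$ is stronger than you need. That hypothesis is instead what would rescue the naive Steps 1--3. With $n=\lceil 2/\eta\rceil\ge 12$, the sharper bound $\binom{2n}{n}\le 4^n/\sqrt{\pi n}$ gives $\log_2 N< 4/\eta+2-\tfrac12\log_2(12\pi)<4/\eta$, since $\tfrac12\log_2(12\pi)\approx 2.62$.
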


\begin{lemma}[Geometric $L^\infty$ bound for $BV$ functions; Lemma 4.2 \cite{AON1}]\label{lem:BV_Linfty}
Let $v\in BV(\mathbb R)$ be compactly supported and satisfy
\[
Dv \le B \quad\text{in the sense of measures}
\]
for some $B>0$. Then
\begin{equation}\label{eq:Linfty_from_BV}
\|v\|_{L^\infty}\le \sqrt{2 B \|v\|_{L^1}}.
\end{equation}
\end{lemma}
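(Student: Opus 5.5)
The plan is a direct geometric argument: the one-sided bound on $Dv$ forces $|v|$ to stay large on an interval of definite length near any point where $|v|$ is large. That interval contributes a triangle of area $a^2/(2B)$ to $\|v\|_{L^1}$, and comparing the two gives \eqref{eq:Linfty_from_BV}.

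First I fix a good representative. Since $v\in BV(\mathbb R)$, I work with its right-continuous representative, again denoted $v$. For $y<z$ it satisfies
\[
v(z)-v(y)=Dv\big((y,z]\big)\le B\,(z-y),
\]
because $Dv\le B$ means that $B\,\mathcal L^1-Dv$ is a nonnegative measure. This representative has the same $L^1$ norm, and its supremum of $|v|$ equals $\|v\|_{L^\infty}$; this is where compact support and $BV$ are used, so that pointwise values are meaningful. It is therefore enough to show $a^2\le 2B\|v\|_{L^1}$ for every value $a=|v(x_0)|>0$ attained at some point $x_0$, and then take the supremum over $x_0$.

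Next I split into two cases according to the sign of $v(x_0)$, which is where the one-sidedness matters.

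\emph{Case $v(x_0)=a>0$.} I look to the left of $x_0$. For $y<x_0$ the increment bound gives
\[
v(y)\ge a-B(x_0-y).
\]
So $v\ge a-B(x_0-y)>0$ on the interval $(x_0-a/B,\,x_0)$, and integrating yields
\[
\|v\|_{L^1}\ \ge\ \int_{x_0-a/B}^{x_0}\big(a-B(x_0-y)\big)\,dy=\frac{a^2}{2B}.
\]

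\emph{Case $v(x_0)=-a<0$.} I look to the right of $x_0$. For $y>x_0$ the increment bound gives
\[
v(y)\le -a+B(y-x_0),
\]
so $|v(y)|\ge a-B(y-x_0)$ on $(x_0,\,x_0+a/B)$. The same triangle computation gives $\|v\|_{L^1}\ge a^2/(2B)$.

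In both cases $a\le\sqrt{2B\|v\|_{L^1}}$, and taking the supremum over $x_0$ yields \eqref{eq:Linfty_from_BV}.

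The only delicate point is the passage from the measure inequality $Dv\le B$ to the pointwise increment bound, together with the identification of $\|v\|_{L^\infty}$ with the supremum of the representative. If one prefers to avoid representatives, I would mollify: $v_\eta=v*\rho_\eta$ is smooth and compactly supported, and satisfies $v_\eta'\le B$, $\|v_\eta\|_{L^1}\le\|v\|_{L^1}$ and $\|v_\eta\|_{L^\infty}\to\|v\|_{L^\infty}$ (by lower semicontinuity along a.e.\ convergence). One then applies the argument above to $v_\eta$ and lets $\eta\to0$.
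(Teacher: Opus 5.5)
The paper gives no proof of this lemma and simply imports it from Lemma~4.2 of \cite{AON1}. Your argument is correct and is the standard triangle comparison behind that result: the right-continuous representative $v(x)=Dv((-\infty,x])$ turns $Dv\le B$ into the increment bound $v(z)-v(y)\le B(z-y)$, satisfies $\sup|v|=\|v\|_{L^\infty}$ by right-continuity, and the two one-sided triangles (to the left of a positive value, to the right of a negative one) give $\|v\|_{L^1}\ge a^2/(2B)$; your mollification variant also works, and compact support is not actually needed beyond $v\in L^1(\mathbb R)$.
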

\begin{proposition}[$\varepsilon$-entropy upper bound for the numerical solutions]
\label{prop:upper_estimate}
For every $\Delta x>0$ and every scale $\varepsilon$ satisfying
\[
0<\varepsilon \le \frac{2L^N+\Delta x}{6}
\left(
\frac{2L^N+\Delta x}{\beta t^N}
+
\sqrt{\frac{2m}{\beta t^N}}
\right),
\]
the piecewise-linear reconstructed numerical solution set satisfies the $\varepsilon$-entropy upper bound
\[
\mathcal H_{\varepsilon}\!\left(
I\!\left(S_{\Delta x}^N(\mathcal D_{\Delta x})\right)\middle|L^1(\mathbb R)
\right)
\le \frac{\Gamma^+}{\varepsilon},
\]
where the constant $\Gamma^+$ is given by
\[
\Gamma^+ := \frac{4(2L^N + \Delta x)^2}{\beta t^N} + 4(2L^N + \Delta x)\sqrt{\frac{2m}{\beta t^N}}.
\]
\end{proposition}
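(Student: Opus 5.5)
The plan is to reduce the upper bound to Lemma~\ref{lem:DLG} by writing every reconstructed numerical solution as the difference between a bounded non-decreasing function and a linear function. Fix $d\in\mathcal D_{[L,m,M]}$, set $u_0=P_{\Delta x}(d)$ and $u^N=S^N_{\Delta x}(u_0)$, and let $w:=I(u^N)$ be the piecewise-linear interpolant through the nodes $(x_j,u^N_j)$. The scheme has finite speed of propagation, so $u^N_j=0$ for $|x_j|>L^N$. Hence $w$ is supported in an interval $J$ of length at most $2L^N+\Delta x$; the extra $\Delta x$ accounts for the interpolation ramps at the two ends.

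\textbf{Step 1: a uniform slope and $L^\infty$ bound.} On each cell the slope of $w$ is $(u^N_{j+1}-u^N_j)/\Delta x$. By \ref{hyp:oslc} this is at most $B:=1/(\beta t^N)$. So $Dw\le B$ in the sense of measures, and this holds on all of $\mathbb R$ because $w$ vanishes outside $J$.

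Next I bound the mass. The scheme is conservative and monotone, hence $L^1$-contractive, which gives $\|u^N\|_{\ell^1}\Delta x\le\|u_0\|_{\ell^1}\Delta x\le\|d\|_{L^1}\le m$. The $L^1$ norm of a piecewise-linear interpolant is bounded by $\Delta x\sum_j|u^N_j|$, so $\|w\|_{L^1}\le m$.

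Lemma~\ref{lem:BV_Linfty} then gives $\|w\|_{L^\infty}\le\sqrt{2m/(\beta t^N)}$.

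\textbf{Step 2: reduction to monotone functions.} Write $J=[a,a+\ell]$ with $\ell:=2L^N+\Delta x$, and set
\[
\phi(x):=w(x)-B(x-a)+B\ell+\sqrt{2Bm},\qquad x\in J .
\]
Since $Dw\le B$, the function $\phi$ is non-increasing on $J$. I would reflect the variable, $x\mapsto 2a+\ell-x$, to make it non-decreasing; alternatively, use the symmetric version of Lemma~\ref{lem:DLG}. Its values lie in $[0,V]$ with
\[
V:=B\ell+2\sqrt{2Bm},
\]
or with the sharper range $B\ell+\sqrt{2Bm}$, if one bounds $\phi$ using $w\ge -\sqrt{2Bm}$ and $w\le \sqrt{2Bm}$ at the appropriate endpoints.

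The map $w\mapsto\phi$ is an isometry of $L^1(J)$, since it translates by a fixed function and then reflects. Any $\varepsilon$-cover of $\mathcal I_{\ell,V}$ therefore pulls back to an $\varepsilon$-cover of the reconstructed set. Functions supported in $J$ have $L^1(\mathbb R)$-distance equal to their $L^1(J)$-distance, so the cover is valid in $L^1(\mathbb R)$.

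Lemma~\ref{lem:DLG} then yields $\mathcal H_\varepsilon\le 4\ell V/\varepsilon$ whenever $\varepsilon\le\ell V/6$. With $V=\ell/(\beta t^N)+\sqrt{2m/(\beta t^N)}$ this is precisely the stated $\Gamma^+$ and the stated range of $\varepsilon$.

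\textbf{Main obstacle.} The delicate point is the range constant $V$. The naive shift gives $B\ell+2\sqrt{2Bm}$, which is a factor-2 loss in the square-root term relative to $\Gamma^+$.

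To obtain $V=B\ell+\sqrt{2Bm}$, I would use the boundary values of $w$ rather than a two-sided $L^\infty$ bound. Since $w(a)=0$ and $Dw\le B$, we have $w(x)\le B(x-a)$. So $\phi\le B\ell+\sqrt{2Bm}$ only requires the lower bound $w\ge-\sqrt{2Bm}$, with the additive constant taken as $\sqrt{2Bm}$; then $\phi\ge 0$ follows from $-B(x-a)+B\ell\ge 0$. I expect this bookkeeping to give exactly the constant $V=B\ell+\sqrt{2Bm}$.

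A secondary check is that the interpolant does not break the support or mass bounds. Both follow directly from the nodal values, as noted in Step 1.
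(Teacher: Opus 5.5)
Your proposal is correct and is essentially the paper's proof. The paper works with nodal values, setting $R^N_j=(j-j_0)\Delta x/(\beta t^N)$ and $w^N_j=R^N_j-u^N_j$ with $j_0=-K^N-1$; this is your $\phi$ up to sign, an additive constant and a reflection. It then bounds the range by $0\le I(w^N)\le w^N_{K^N}\le B\ell+\|u^N\|_{L^\infty}\le B\ell+\sqrt{2Bm}$, exactly as in your refined bookkeeping, and applies Lemma~\ref{lem:DLG}.

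One caveat, which you share with the paper, concerns the support length. The interpolant of nodal values vanishing for $|j|>K^N$ is supported on $[x_{-K^N-1},x_{K^N+1}]$, whose length is $2L^N+2\Delta x$, because there is a ramp of width $\Delta x$ at each end. So your assertion that $J$ has length $2L^N+\Delta x$ drops one ramp, as does the paper's restriction to $[x_{j_0},x_{K^N}]$. On the correct interval $w$ vanishes at both endpoints, so $B(x-a)-w$ already takes values in $[0,B(2L^N+2\Delta x)]$. Lemma~\ref{lem:DLG} then gives the bound $4B(2L^N+2\Delta x)^2/\varepsilon$, without needing Lemma~\ref{lem:BV_Linfty}, though this is not literally the stated $\Gamma^+$.
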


\begin{proof}
By~\eqref{eq:fd} the discrete solution has compact support. In particular, $u^N_j=0$ for $|j|>K^N$, where $K^N=L^N/\Delta x$.
From the OSLC \ref{hyp:oslc} we have, for every index $j$,
\[
\frac{u^N_{j+1}-u^N_j}{\Delta x}\le \frac{1}{\beta t^N},
\]
hence the piecewise-linear interpolant $I(u^N)$ satisfies, on each cell,
\[
\partial_x I(u^N)(x)=\frac{u^N_{j+1}-u^N_j}{\Delta x}\le \frac{1}{\beta t^N}
\qquad \text{a.e. } x\in(x_j,x_{j+1}).
\]
Therefore $I(u^N)\in BV([-L^N,L^N])$ and, in the sense of measures,
\begin{equation}\label{der_above}
D I(u^N)\le \frac{1}{\beta t^N}.
\end{equation}

We now fix an index outside the support, for instance
\[
j_0:=-K^N-1,
\]
so that $u^N_{j_0}=0$ for every admissible datum $u_0\in \mathcal D_{\Delta x}$. Define the step function $R^N=\{R^N_j\}$ and the remainder $w^N=\{w^N_j\}$ on the indices $j=j_0,\dots,K^N$ by
\[
R^N_j:=\frac{(j-j_0)\Delta x}{\beta t^N},
\qquad
w^N_j:=R^N_j-u^N_j.
\]
With this choice $R^N_{j_0}=0$ and $w^N_{j_0}=0$. Moreover,
\[
w^N_{j+1}-w^N_j
=
\frac{\Delta x}{\beta t^N}-(u^N_{j+1}-u^N_j)\ge 0,
\]
by the OSLC, hence $w^N$ is non-decreasing and nonnegative on the index set.

By linearity of $I$, we can decompose the numerical solution as
\[
I(u^N)=I(R^N)-I(w^N).
\]
Since $I(R^N)$ is a fixed deterministic function, translation by $I(R^N)$ is an isometry in $L^1$, and therefore the $\varepsilon$-covering numbers of the family
\[
I\!\left(S^N_{\Delta x}(\mathcal D_{\Delta x})\right)=\{I(u^N)\}
\]
coincide with those of the remainder family $\{I(w^N)\}$. Therefore it suffices to estimate the entropy of $\{I(w^N)\}$ in $L^1([x_{j_0},x_{K^N}])$.

We now estimate the range of $I(w^N)$. Since $w^N$ is nondecreasing and $w^N_{j_0}=0$, we have
\[
0\le I(w^N)(x)\le w^N_{K^N}
\qquad \text{for a.e. } x\in[x_{j_0},x_{K^N}].
\]
Now
\[
w^N_{K^N}
=
R^N_{K^N}-u^N_{K^N}
=
\frac{(K^N-j_0)\Delta x}{\beta t^N}-u^N_{K^N}
=
\frac{2L^N+\Delta x}{\beta t^N}-u^N_{K^N},
\]
hence
\[
w^N_{K^N}
\le
\frac{2L^N+\Delta x}{\beta t^N}+\|u^N\|_{L^\infty}.
\]

By $L^1$-stability of the conservative monotone scheme (and compact support), one has
\[
\|u^N\|_{L^1}\le \|u^0\|_{L^1}\le m.
\]
Combining this with the distributional slope bound \eqref{der_above} and applying Lemma \ref{lem:BV_Linfty} yields
\[
\|u^N\|_{L^\infty}
\le
\sqrt{\frac{2\|u^N\|_{L^1}}{\beta t^N}}
\le
\sqrt{\frac{2m}{\beta t^N}}.
\]
Therefore
\[
0\le I(w^N)(x)\le V
\qquad\text{a.e. on }[x_{j_0},x_{K^N}],
\]
with
\[
V:=
\frac{2L^N+\Delta x}{\beta t^N}
+
\sqrt{\frac{2m}{\beta t^N}}.
\]
Since the length of the interval is $2L^N+\Delta x$, after a translation of the spatial variable we have
\[
\{I(w^N)\}\subset \mathcal I_{\,2L^N+\Delta x,\;V}.
\]
By Lemma \ref{lem:DLG} (for $0<\varepsilon\le \frac{(2L^N+\Delta x)V}{6}$), it follows that
\[
\mathcal H_{\varepsilon}\big(\{I(w^N)\}\mid L^1([x_{j_0},x_{K^N}])\big)
\le
\frac{4(2L^N+\Delta x)V}{\varepsilon}.
\]
Substituting the bound on $V$ yields
\[
\mathcal H_{\varepsilon}\big(I(S^N_{\Delta x}(\mathcal D_{\Delta x})) \mid L^1(\mathbb R)\big)
=
\mathcal H_{\varepsilon}\big(\{I(w^N)\}\mid L^1([x_{j_0},x_{K^N}])\big)
\le
\frac{4(2L^N+\Delta x)}{\varepsilon}
\left(
\frac{2L^N+\Delta x}{\beta t^N}
+
\sqrt{\frac{2m}{\beta t^N}}
\right).
\]
This gives the claim.
\end{proof}
\subsection*{Lower estimate}
As for the upper bound, in order to state and prove the proposition related to the lower bound, we need two lemmas (proofs in the Appendix).

\begin{lemma}[Continuous separated targets; adapted from Propositions 2.1 and 2.2 in \cite{AON1}]
\label{lem:continuous_separated}
Let $h$, $L_T$ as in \eqref{eq:h_param}, \eqref{eq:suppLT}, and let $\mathcal{A}_{[L_T, Lh, h, (2T|f''(0)|)^{-1}]}$ as in~\eqref{eq:Adef}. For every scale $0 < \varepsilon \le  Lh/8$, there exists a family $F_{\varepsilon} \subset \mathcal{A}_{[L_T, Lh, h, (2T|f''(0)|)^{-1}]} \subset S^T(\mathcal D_{[L,m,M]})$ of exact entropy solutions such that
\[
\|v-w\|_{L^1(\mathbb R)} > 2\varepsilon \qquad\text{for all distinct }v,w \in F_{\varepsilon},
\]
and its cardinality satisfies
\[
|F_{\varepsilon}| \ge 2^{\tilde \Gamma^- / \varepsilon},
\]
where
\[
\tilde \Gamma^- := \frac{L^2}{48 \ln(2)\, T |f''(0)|}.
\]
\end{lemma}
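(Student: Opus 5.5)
The plan follows the construction of \cite{AON1}, carried out in two parts. First, show that $\mathcal A := \mathcal{A}_{[L_T, Lh, h, (2T|f''(0)|)^{-1}]}\subset S^T(\mathcal D_{[L,m,M]})$, i.e.\ that every target has an exact precursor. Second, build a large $2\varepsilon$-separated family inside $\mathcal A$ from block profiles.

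\emph{Reachability.} Given $v\in\mathcal A$, I solve the equation backward by characteristics. Since $Dv\le (2T|f''(0)|)^{-1}$ and $f''$ stays close to $f''(0)$ on $[-h,h]$, the map
\[
x\mapsto x - T f'(v(x))
\]
is nondecreasing. So I define $d$ as the pushforward of $v$ along these backward characteristics: $d(x - Tf'(v(x))) = v(x)$, with the jumps of $v$ handled by the corresponding monotone filling of the gaps. Because the slope bound excludes compression, the forward evolution of $d$ stays smooth: it is a rarefaction-free, shock-free forward flow, possibly with decreasing jumps kept as admissible shocks. This gives $S^T(d)=v$.

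The constraints on $d$ then come out as follows:
\begin{itemize}
\item $|f'(v)|\le |f''(0)|h+o(h)$ gives a support shift of at most $2T|f''(0)|h$, hence $\supp d\subset[-L,L]$ by \eqref{eq:suppLT}.
\item $\|d\|_{L^\infty}\le h\le M$.
\item $\|d\|_{L^1}\le 2Lh\le m$, using $h\le m/(2L)$.
\end{itemize}
The precise use of $f''\ge c$ and of $f''$ near $0$ (constants $|f''(0)|$ versus a sup over $[-h,h]$) is where I must follow \cite{AON1} carefully. I may absorb the discrepancy into the factor $2$ in the slope bound.

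\emph{Separated family.} Fix $n\in\mathbb N$ and split $[-L_T,L_T]$ into $n$ intervals of length $\ell=2L_T/n$. On each interval I place either $0$ or a fixed profile $\phi$, and I want $\phi$ to satisfy three things:
\begin{itemize}
\item $Dv\le B:=(2T|f''(0)|)^{-1}$;
\item $0\le\phi\le h$;
\item $\phi$ has a nontrivial mass, and, to keep the total $L^1$ norm at most $Lh$ while also keeping enough mass per block, a height of order $\min(h,B\ell)$.
\end{itemize}
A natural choice is $\phi$ decreasing from height $a=\min(h,B\ell)$ to $0$ via a downward jump, preceded by a ramp of slope $B$. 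A jump down is permitted since only upward derivatives are constrained. Its mass is then about $a\ell/2$, or $a^2/(2B)$ if the ramp is short. The cleaner option is $\phi(x)=B(x-x_k)$ on the block, dropping to $0$ at the right end, with mass $B\ell^2/2$.

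The patterns are indexed by $\{0,1\}^n$. Gilbert--Varshamov (a Hamming-distance code argument) then gives at least $2^{n/8}$ patterns that pairwise differ in at least $n/8$ positions, so pairwise $L^1$ distances are at least $(n/8)\cdot\mathrm{mass}(\phi)$.

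Choosing $n\approx c L^2 B/\varepsilon$, so that $(n/8)B\ell^2/2>2\varepsilon$ with $\ell\approx L/n$, gives $|F_\varepsilon|\ge 2^{n/8}$. Since $L_T\ge 3L/4$ by \eqref{eq:h_param}, the constant should come out as $L^2/(48\ln 2\,T|f''(0)|)$ after the bookkeeping. Note that the $\ln 2$ in $\tilde\Gamma^-$ suggests the count is really done via $e^{\cdot}$ bounds (Chernoff) rather than $2^{n/8}$. The condition $\varepsilon\le Lh/8$ ensures $n\ge1$ and $B\ell\le h$, so the $L^\infty$ and $L^1$ constraints hold; the $L^1$ constraint holds because the total is at most $n B\ell^2/2\le Lh$.

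\emph{Main obstacle.} I expect the backward-characteristics reachability step to be the hardest part. Verifying that the constructed $d$ is admissible and actually evolves into $v$ requires the Lax--Oleinik formula with nonquadratic $f$; the relevant tool is Lemma 4.2/Prop.~2.1 of \cite{AON1}, which I would invoke essentially verbatim. Matching the exact constant $48\ln2$ is secondary bookkeeping.
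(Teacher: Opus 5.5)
Your architecture is the same as the paper's. You import reachability $\mathcal A\subset S^T(\mathcal D_{[L,m,M]})$ from Proposition~2.1 of \cite{AON1}; the paper goes through $C^1$ approximations and a density argument rather than filling the gaps directly, but that part can be imported as the paper does. You then use binary block profiles, Hamming distance, and a greedy (Gilbert--Varshamov) extraction, exactly as the paper does.

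The genuine gap is your choice of alphabet. This is not secondary bookkeeping: the content of the lemma is the explicit constant $\tilde\Gamma^-$ and the range $\varepsilon\le Lh/8$. With the letters $0$ and the ramp $B(x-x_k)$, the $L^1$ separation per differing block is only $B\ell^2/2$. This is optimal for any alphabet containing $0$: a companion $\phi$ must start $\le 0$ and end $\ge 0$ on the block, and only downward jumps are allowed.

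With your parameters ($\ell=2L_T/n$, Hamming distance $\ge n/8$, count $2^{n/8}$), separation forces $n<BL_T^2/(8\varepsilon)$. You then get $\log_2|F_\varepsilon|\approx L_T^2/(128\,T f''(0)\varepsilon)$. This is a factor $4$ to $7$ below $\tilde\Gamma^-/\varepsilon\approx L^2/(33.3\,Tf''(0)\varepsilon)$.

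Optimizing the Hamming radius $\rho n$ does not rescue it. The greedy count $2^n/\mathcal C$ is at most about $2^{n(1-H(\rho))}$, where $H(\rho)=-\rho\log_2\rho-(1-\rho)\log_2(1-\rho)$. This gives at most $\max_\rho\rho(1-H(\rho))\,BL_T^2/\varepsilon\approx 0.0293\,L_T^2/(Tf''(0)\varepsilon)$. That is below $\tilde\Gamma^-/\varepsilon\approx 0.0301\,L^2/(Tf''(0)\varepsilon)$ even if $L_T=L$, and about half of it when $L_T=3L/4$, which \eqref{eq:h_param} allows.

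Your range claim also fails. For your $n$ the block height is $B\ell\approx 16\varepsilon/L_T$, so $B\ell\le h$ needs $\varepsilon\lesssim L_Th/16$. This is not implied by $\varepsilon\le Lh/8$.

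The missing idea is the two-sided alphabet behind the paper's $\iota\in\{-1,1\}^n$. On each block, choose between $B(x-x_k)$ and $B(x-x_k)-B\ell$. Every block interface is then a downward jump, so $Dv\le B$. Also $\|v\|_{L^\infty}\le B\ell$ and $\|v\|_{L^1}\le L_T B\ell\le Lh$. The two letters differ by $B\ell$ on the whole block, which is the maximal pointwise difference allowed by the one-sided slope bound, so the per-block separation doubles to $B\ell^2$.

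Now discard closed Hamming balls of radius $n/6$ and use Hoeffding, $\sum_{j\le n/6}\binom nj\le 2^n e^{-2n/9}$; this is where the $\ln 2$ comes from. Separation $>2\varepsilon$ then holds for $n=\lfloor BL_T^2/(3\varepsilon)\rfloor$. This gives $\log_2|F_\varepsilon|\ge 2n/(9\ln 2)\approx L_T^2/(27\ln 2\,Tf''(0)\varepsilon)\ge\tilde\Gamma^-/\varepsilon$, using $L_T\ge 3L/4$. The height is $B\ell\approx 6\varepsilon/L_T$, which is $\le h$ exactly when $\varepsilon\le L_Th/6$, and this follows from $\varepsilon\le Lh/8$. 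Both statements hold up to the rounding of $n$.

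On the reachability side, two steps need $\sup_{[-h,h]}f''\le 2f''(0)$: the monotonicity of $x\mapsto x-Tf'(v(x))$, and the support shift $\le 2T|f''(0)|h$. This condition is not contained in \eqref{eq:h_param}. You correctly flag it, and it must be taken from \cite{AON1}, just as the paper does.
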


\begin{lemma}[Uniform bounded variation of precursors for the target space; adapted from Proposition 2.1 in \cite{AON1}]
\label{lem:uniform_tv_precursors}
For every target profile $v \in \mathcal{A}_{[L_T, Lh, h, (2T|f''(0)|)^{-1}]}$, 
any precursor $d_{v} \in \mathcal D_{[L,m,M]}$ satisfying 
\[
S^T(d_{v}) = v
\]
belongs to $BV(\mathbb R)$. Moreover, there exists a constant $C_{BV} > 0$, depending only on $L$, $T$, $h$, and the flux $f$, such that
\[
TV(d_{v}) \le C_{BV} \qquad\text{for all }v \in \mathcal{A}_{[L_T, Lh, h, (2T|f''(0)|)^{-1}]}.
\]
\end{lemma}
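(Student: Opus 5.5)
The plan is to read off the precursor from the final profile using Dafermos' theory of generalized characteristics for the convex law \eqref{eq:cl}. The intended argument is backward tracing: first from the target to the initial line, then a total-variation count along the resulting map. Fix $v\in\mathcal A_{[L_T,Lh,h,(2T|f''(0)|)^{-1}]}$ and any $d_v\in\mathcal D_{[L,m,M]}$ with $S^T(d_v)=v$. Write $u$ for the entropy solution with $u(\cdot,0)=d_v$. For every $x\in\mathbb R$, the minimal and maximal backward characteristics from $(x,T)$ are genuine straight lines. They carry the constant values $v(x\pm)$ and reach $t=0$ at
\[
y_\pm(x)=x-T f'(v(x\pm)).
\]
By the ordering of characteristics, $x\mapsto y_\pm(x)$ is nondecreasing.

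The first step is to exploit $Dv\le (2T|f''(0)|)^{-1}$. This bound says that no two characteristics emanating at time $T$ from a rarefaction-type part of $v$ would cross before time $2T$. I will check this using $f''\approx f''(0)$ on $[-h,h]$, after shrinking $h$ through \eqref{eq:h_param} if needed. Consequently $y_\pm$ has slope bounded below by $1-Tf''(v)\,Dv\ge 1/2$ up to a constant, on the parts where $v$ is continuous. This gives two facts. On the set $Y:=\{y_\pm(x)\}$ covered by backward characteristics, $d_v(y)=v(x)$ whenever $y=y_\pm(x)$, by the Lax–Oleinik formula. Moreover, $TV(d_v|_Y)\le TV(v)$, since composing with a monotone map does not increase variation. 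Next, $TV(v)$ is bounded using only the structure of $\mathcal A$. The positive variation satisfies $(Dv)^+([-L_T,L_T])\le 2L_T/(2T|f''(0)|)$. Because $v$ has compact support, the negative variation equals the positive variation. Hence
\[
TV(v)\le \frac{2L_T}{T|f''(0)|},
\]
which depends only on $L$, $T$, $h$ and $f$.

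The second step treats the complement $\mathbb R\setminus Y$. This set is a countable union of open intervals $(y_+(x),y_-(x))$, one for each shock point $x$ of $v$, i.e. each point with $v(x-)>v(x+)$. Each such interval is the initial segment absorbed into the shock at $(x,T)$. At least one bound is unconditional: the Rankine–Hugoniot relation and the Oleinik entropy inequality force
\[
\int_{y_+}^{y_-} d_v = \text{(flux balance determined by } v(x\pm),\, T).
\]
The plan is then to bound $TV(d_v)$ on each gap by $2M$ plus the jump at the endpoints. Summing over the shocks, the number of shocks is to be controlled through $\sum_x\bigl(v(x-)-v(x+)\bigr)\le TV(v)$ together with a lower bound on the jump size tied to the gap length. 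Combined with the first step, this would give
\[
TV(d_v)\le TV(v)+2M\cdot\#\{\text{gaps}\}+TV(v)=:C_{BV}.
\]

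The main obstacle is exactly this second step. The characteristics give no pointwise information on $d_v$ inside a gap; the only constraints there are the integral identity and the requirement that the Oleinik/Lax–Oleinik minimizers avoid the interior of the gap. I would first attempt to derive, from the Hopf–Lax representation $u(x,T)=\arg\min_y\{\int_0^y d_v+T f^*((x-y)/T)\}$, a one-sided inequality on the primitive of $d_v$ over each gap, and then hope to convert it into a variation bound. I am not confident this conversion is possible. Inside a gap, $d_v$ may oscillate with bounded amplitude while keeping the primitive above the relevant tangent parabola. If that happens, the universal statement as worded would hold only with $TV(d_v)$ replaced by the variation of a canonical precursor. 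In that case I would flag the gap-interior estimate as the step needing an additional hypothesis, for instance that $v$ has no shocks, or that precursors are selected as the backward-characteristic datum.
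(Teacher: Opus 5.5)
Your proposal does not prove the statement, but the step you flag in the gaps cannot be repaired. With ``any precursor'' the statement is false, for exactly the reason you suspect. Take $f(u)=u^2/2$ and, for small $a>0$, $v(x)=\frac{x}{2T}\mathbf 1_{(-a,a)}(x)$, which lies in $\mathcal A_{[L_T,Lh,h,(2T)^{-1}]}$. The N-wave datum $d(x)=\frac{x}{T}\mathbf 1_{(-a/\sqrt2,\,a/\sqrt2)}(x)$ satisfies $S^T(d)=v$. The shock of $v$ at $x=a$ absorbs the gap $(a/2,a)$; with your formula $y_\pm(x)=x-Tf'(v(x\pm))$ the gaps are $(y_-(x),y_+(x))$, not $(y_+(x),y_-(x))$. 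Write $U_0=\int_{-\infty}^{\cdot}d$ and $\Phi_x(y):=U_0(y)+Tf^*\big((x-y)/T\big)=U_0(y)+\frac{(x-y)^2}{2T}$. On $(a/\sqrt2,a)$ one has $U_0\equiv0$, so $\Phi_a$ stays strictly above its minimum $0$, which is attained only at $y\in\{a/2,a\}$. Monotonicity of Hopf--Lax minimizers and compactness then give $\eta>0$ with $\Phi_x(y)-\min\Phi_x\ge\eta$ for all $x\in\mathbb R$ and all $y$ in a compact $J\subset(a/\sqrt2,a)$. Now add to $d$ any $\psi$ supported in $J$ with $\int\psi=0$, $|\psi|\le\epsilon$ and primitive below $\eta$, e.g.\ $\epsilon\sin$ with $k$ full periods on $J$. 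This leaves $S^T$ unchanged and keeps the datum in $\mathcal D_{[L,m,M]}$, while $TV(d+\psi)\ge 4k\epsilon\to\infty$. Placing such blocks with $k=1,2,\dots$ on disjoint subintervals of $J$ even gives a precursor outside $BV$. So neither $BV$ membership nor a uniform bound holds for arbitrary precursors.

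The paper's proof is in fact only an existence argument, and existence is all that Proposition~\ref{prop:lower_estimate} uses. It approximates $v$ by $C^1$ profiles $v^n$ in an enlarged class and sets $d^n(x)=w^n(T,-x)$, with $w^n$ the forward solution from $v^n(-\cdot)$. A uniform bound $\|w^n_x(T,\cdot)\|_\infty\le C_*$ gives $TV(d^n)\le 2LC_*$. It then passes to an $L^1$ limit using $BV$ compactness, lower semicontinuity of $TV$, and $L^1$-stability of $S^T$.

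Your fallback, the backward-characteristic datum, is the same object: reflection turns each shock into a rarefaction, i.e.\ a centered compression filling the gap. Your first step already proves this existential version directly, without approximation. Let $x(y)$ be the a.e.\ defined nondecreasing inverse of $y_\pm$ and set $d(y)=(f')^{-1}\big((x(y)-y)/T\big)$. Then $\Phi_x'(y)=d(y)-(f')^{-1}\big((x-y)/T\big)$ has the sign of $x(y)-x$. Hence the minimizers for $x$ are exactly $[y_-(x),y_+(x)]$, and $S^T(d)=v$. Moreover $d$ runs through the values of $v$ in order, with each jump filled monotonically, so $TV(d)=TV(v)\le 2L_T/(T|f''(0)|)$, an explicit $C_{BV}$.

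Two caveats remain. First, this needs $\sup_{[-h,h]}f''\le 2f''(0)$, both for monotonicity of $y_\pm$ and for $\supp d\subset[-L,L]$, and \eqref{eq:h_param} does not contain it. As you noticed, without it some $v\in\mathcal A$ violate Oleinik's bound $\partial_x f'(v)\le 1/T$ and have no precursor at all. Second, drop the term $2M\cdot\#\{\text{gaps}\}$, which is infinite in general because $v\in\mathcal A$ may have countably many jumps.
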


\begin{proposition}[$\varepsilon$-entropy lower bound for the numerical solutions]
\label{prop:lower_estimate}
Let $\alpha \in (0,1]$.
Then, for every $\Delta x>0$ and every scale $\varepsilon$ satisfying
\[
\frac{\delta(\mathcal D_{[L,m,M,C_{BV}]} )}{\alpha} \le \varepsilon \le \frac{L h}{8(1+2\alpha)},
\]
the piecewise-linear reconstructed numerical solution set satisfies the $\varepsilon$-entropy lower bound
\begin{equation}
\label{eq:lwb}
\mathcal H_\varepsilon\!\left(I\!\left(S_{\Delta x}^N(\mathcal D_{\Delta x})\right)\middle|L^1(\mathbb R)\right) \ge \frac{\Gamma^-}{\varepsilon},
\end{equation}
where $\Gamma^-:=\tilde \Gamma^-/(1+2\alpha)$, with $\tilde \Gamma^-$ given by Lemma \ref{lem:continuous_separated}.
\end{proposition}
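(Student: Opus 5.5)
The plan is to transfer the separated family of Lemma~\ref{lem:continuous_separated} to the numerical set via the uniform forward error, applied at a slightly enlarged scale. Set $\varepsilon' := (1+2\alpha)\varepsilon$. The hypothesis $\varepsilon \le Lh/(8(1+2\alpha))$ gives $\varepsilon' \le Lh/8$, so Lemma~\ref{lem:continuous_separated} applies at scale $\varepsilon'$. It yields a family $F_{\varepsilon'}\subset \mathcal{A}_{[L_T, Lh, h, (2T|f''(0)|)^{-1}]}$ whose members are pairwise more than $2\varepsilon'$ apart in $L^1$, with $|F_{\varepsilon'}|\ge 2^{\tilde\Gamma^-/\varepsilon'} = 2^{\Gamma^-/\varepsilon}$.

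For each $v\in F_{\varepsilon'}$, the inclusion $\mathcal A\subset S^T(\mathcal D_{[L,m,M]})$ gives a precursor $d_v\in\mathcal D_{[L,m,M]}$ with $S^T(d_v)=v$. By Lemma~\ref{lem:uniform_tv_precursors}, $TV(d_v)\le C_{BV}$, so $d_v\in\mathcal D_{[L,m,M,C_{BV}]}$. Put $u_v := I(S^N_{\Delta x}(P_{\Delta x}d_v))$, which lies in $I(S^N_{\Delta x}(\mathcal D_{\Delta x}))$. By the definition of $\delta(\mathcal D_{[L,m,M,C_{BV}]})$ and the lower constraint on $\varepsilon$,
\[
\|u_v - v\|_{L^1}\le \delta(\mathcal D_{[L,m,M,C_{BV}]})\le \alpha\varepsilon .
\]
Hence for distinct $v,w\in F_{\varepsilon'}$,
\[
\|u_v-u_w\|_{L^1}\ge \|v-w\|_{L^1}-2\alpha\varepsilon > 2(1+2\alpha)\varepsilon-2\alpha\varepsilon = 2\varepsilon+2\alpha\varepsilon > 2\varepsilon .
\]
In particular, the map $v\mapsto u_v$ is injective, and the points $u_v$ are pairwise strictly more than $2\varepsilon$ apart.

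It remains to count. Any set of diameter at most $2\varepsilon$ contains at most one of the $u_v$. Therefore every $\varepsilon$-cover of $I(S^N_{\Delta x}(\mathcal D_{\Delta x}))$ has at least $|F_{\varepsilon'}|$ elements. Taking $\log_2$ gives $\mathcal H_\varepsilon \ge \tilde\Gamma^-/((1+2\alpha)\varepsilon) = \Gamma^-/\varepsilon$, which is \eqref{eq:lwb}.

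The substantive difficulty has already been absorbed into Lemmas~\ref{lem:continuous_separated} and~\ref{lem:uniform_tv_precursors}. The point needing care here is that the precursors genuinely lie in the class over which $\delta$ is taken, i.e.\ the uniform TV bound, and that the metric in the definition of $\delta$ is the full $L^1(\mathbb R)$ norm. The scale bookkeeping is routine. Indeed, the separation margin has slack: scaling by $(1+\alpha)$ would already suffice. The stated $(1+2\alpha)$ therefore certainly works.
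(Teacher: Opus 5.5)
Your proof is correct and follows essentially the same route as the paper: enlarge the scale to $(1+2\alpha)\varepsilon$, take the separated family from Lemma~\ref{lem:continuous_separated} with bounded-variation precursors from Lemma~\ref{lem:uniform_tv_precursors}, push it through the scheme using $\delta(\mathcal D_{[L,m,M,C_{BV}]})\le\alpha\varepsilon$, and conclude with the reverse triangle inequality and a packing count. Your scale check $(1+2\alpha)\varepsilon\le Lh/8$ is actually cleaner than the paper's, whose proof writes the range as $L_Th/6$, and that does not match the hypothesis of Lemma~\ref{lem:continuous_separated}; your remark that the factor $1+\alpha$ would suffice is also right, since the paper's own chain already ends at $2(1+\alpha)\varepsilon>2\varepsilon$.
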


\begin{proof}
By the assumptions on the scheme, the $L^1$-distance between the interpolated numerical solution and the exact entropy solution is bounded uniformly over sets of uniform bounded variation \cite{kuz,hr}. Since Lemma \ref{lem:uniform_tv_precursors} guarantees that all precursors $d_{v}$ of any $F_{\varepsilon}$ satisfy $TV(d_{v}) \le C_{BV}$, it holds that
\[
\max_{v\in F_{\varepsilon}}
\| I\!\left(S_{\Delta x}^N(P_{\Delta x}(d_v))\right) - v \|_{L^1(\mathbb R)} \le \delta(\mathcal D_{[L,m,M,C_{BV}]})
<\infty.
\]

Now fix $\alpha\in(0,1]$, and choose $\Delta x>0$ and $\varepsilon$ such that
\[
\frac{\delta(\mathcal D_{[L,m,M,C_{BV}]})}{\alpha} \le \varepsilon \le \frac{L_Th}{6(1+2\alpha)},
\]
Set a corresponding target scale as $\tilde\varepsilon := (1+2\alpha)\varepsilon$. Since $\tilde\varepsilon\in(0,L_Th/6]$, the family $F_{\tilde\varepsilon}$ provided by Lemma \ref{lem:continuous_separated} is well-defined and strictly $2\tilde\varepsilon$-separated in $L^1(\mathbb R)$.

For each target $v\in F_{\tilde\varepsilon}$, letting $d_v$ be a corresponding precursor provided by Lemma~\ref{lem:uniform_tv_precursors}
we define its corresponding discrete realization as
\[
v_{\Delta x} := I\!\left(S_{\Delta x}^N(P_{\Delta x}(d_{v}))\right)\in I\!\left(S_{\Delta x}^N(\mathcal D_{\Delta x})\right).
\]
By the uniform error estimate and the definition of the range for $\varepsilon$, we have
\[
\| v_{\Delta x} - v \|_{L^1(\mathbb R)} \le \delta(\mathcal D_{[L,m,M,C_{BV}]}) \le \alpha\varepsilon
\qquad\text{for all }v\in F_{\tilde\varepsilon}.
\]

Now, let $v,w\in F_{\tilde\varepsilon}$ be distinct targets ($v\neq w$). By the reverse triangle inequality, the distance between their discrete realizations satisfies
\begin{align*}
\| v_{\Delta x} - w_{\Delta x} \|_{L^1(\mathbb R)}
&\ge \| v - w \|_{L^1(\mathbb R)} - \| v_{\Delta x} - v \|_{L^1(\mathbb R)} - \| w_{\Delta x} - w \|_{L^1(\mathbb R)} \\
&> 2\tilde\varepsilon - 2\alpha\varepsilon \\
&= 2(1+2\alpha)\varepsilon - 2\alpha\varepsilon \\
&= 2(1+\alpha)\varepsilon \\
&> 2\varepsilon.
\end{align*}

Therefore, the finite numerical family
\(
\{\,v_{\Delta x}: v\in F_{\tilde\varepsilon}\,\}
\subset I\!\left(S_{\Delta x}^N(\mathcal D_{\Delta x})\right)
\)
is strictly $2\varepsilon$-separated in $L^1(\mathbb R)$. Hence, any $\varepsilon$-cover of the discrete image $I(S_{\Delta x}^N(\mathcal D_{\Delta x}))$ must contain at least $|F_{\tilde\varepsilon}|$ balls. This implies
\[
N_\varepsilon\!\left(I\!\left(S_{\Delta x}^N(\mathcal D_{\Delta x})\right)\middle| L^1(\mathbb R)\right)
\ge |F_{\tilde\varepsilon}| \ge 2^{\frac{\tilde \Gamma^-}{(1+2\alpha)\varepsilon}}.
\]
Hence, the claim.
\end{proof}
\section{Discussion and Open Problems}\label{discussion}
In this final section, we step back from the technical proofs to contextualize our results. We first compare our estimates with the established bounds for the exact entropy solution set to highlight the parallels. Subsequently, we clarify the practical implications of these entropy bounds, specifically regarding Lax's definition of high-resolution schemes and a posteriori error recovery, and conclude by outlining open questions.
\subsection{Comparison with Continuous Entropy Estimates}

For the exact problem, De Lellis and Golse \cite{DLG} provided an upper bound on the $\varepsilon$-entropy of $S^T(\mathcal{D}_{[L,m,M]})$, while Ancona, Glass, and Nguyen \cite{AON1} established a matching lower bound. To highlight the structural parallels between the exact flow and the numerical scheme, we present their respective bounds side-by-side:

\begin{align}
    & \text{\textbf{Continuous Exact Flow}} && \text{\textbf{Numerical Scheme}} \nonumber\\[2ex]
    \text{\textbf{Upper Bound:}} \quad
    & \frac{4}{\varepsilon} \left( \frac{4 L(T)^2}{cT} + 4 L(T) \sqrt{\frac{2m}{cT}} \right) && 
     \frac{1}{\varepsilon} \left( \frac{4(2L^N + \Delta x)^2}{\beta t^N} + 4(2L^N + \Delta x)\sqrt{\frac{2m}{\beta t^N}}\right) \label{eq:upper_comp} \\[3ex]
    \text{\textbf{Lower Bound:}} \quad
    & \frac{1}{\varepsilon} \left( \frac{L^2}{48 \ln(2)T |f''(0)|} \right) && 
    \frac{1}{\varepsilon} \left( \frac{L^2}{48 \ln(2)T |f''(0)|(1+2\alpha)} \right) \label{eq:lower_comp}
\end{align}

By comparing the bounds (with $L^N:=L+N \Delta x$, $t^N:=N \Delta t = T$
in the numerical scheme), three structural features stand out:

\begin{enumerate}
    \item \textit{The regularizing mechanism ($c$ vs.\ $\beta$):} In the upper bound for the exact solution set \eqref{eq:upper_comp}, the entropy depends inversely on the convexity parameter $c \le f''(u)$, which governs the Oleinik-type one-sided Lipschitz regularization \cite{Oleinik}. In the bound for the numerical solution set, the same role is played by the discrete OSLC constant $\beta$, through $\ell^+(u^n) \le 1/(\beta t^n)$. Thus, the upper metric complexity of the numerical set is controlled by the scheme's discrete regularization strength. This is the correct analogue of the continuous mechanism, although the explicit prefactor is scheme-dependent.

\item \textit{Support propagation ($L(T)$ vs.\ the discrete support scale):} The upper bound for the continuous counterpart depends on the exact support length $L(T)$ of the exact solution at time $T$. In the discrete setting, the proof uses an effective support scale of order $2L^N$, with only a lower-order grid correction of size $\Delta x$. Thus, the discrete estimate has the same quadratic dependence on the support length as the continuous one, and the additional $\Delta x$ term is a harmless geometric artefact rather than a change of principle. 

    \item \textit{Lower bound inheritance up to a tolerance factor:} The lower bound for the numerical solution set retains the same $1/\varepsilon$ scaling and the same dependence on $|f''(0)|$ as the lower bound of \cite{AON1} , but with the additional factor $(1+2\alpha)^{-1}$ coming from the comparison between exact and numerical solutions. In this sense, the discrete scheme inherits the entropy complexity of the exact solutions at scales above the resolution threshold $\varepsilon \ge \delta(\mathcal D_{[L,m,M,C_{BV}]})/\alpha$.
\end{enumerate}

\subsection{Remarks on the Entropy estimates}

It is important to emphasize the conceptual meaning and the limitations of entropy-based diagnostics. The quantity $\mathcal H_\varepsilon$ measures the number of distinguishable patterns retained by the numerical solution set at scale $\varepsilon$ (covering numbers), but it is insensitive to systematic, structure-preserving biases that shift or reparametrize those patterns. For example, if a scheme yields accurate approximations, a scheme obtained by translating its solutions by a nonzero value preserves the same $\varepsilon$-entropy as the original, whenever the translation is an isometry in the metric under consideration (e.g.\ in $L^1(\mathbb R)$). Nevertheless, the translated scheme may be arbitrarily inaccurate in norm, because $\mathcal H_\varepsilon$ does not control pointwise or normwise alignment.

This observation clarifies the meaning of ``high resolution'': it is a necessary condition for the \emph{possibility} of a posteriori recovery of accuracy because no information at scale $\varepsilon$ was lost. By contrast, if the numerical map collapses many exact solutions into a small number of numerically indistinguishable representatives at scale $\varepsilon$ (e.g.\ through excessive numerical diffusion that renders different shock positions identical up to $\varepsilon$), then the information has been irreversibly lost.

A complementary perspective is provided by the a posteriori arguments of Bressan, Chiri, and Shen \cite{BCS}. Their analysis shows that, once an approximate solution is available, one may recover an error bound by post-processing the output: first verifying a uniform total variation bound, then locating the relevant large shocks, and finally checking that the remaining regions have small oscillation. This is not an entropy argument, but it is highly relevant here because it shows that accurate recovery can be achieved from a numerically computed profile that is not itself pointwise accurate, provided that the correct structural information is still present in the output.

In that sense, the present entropy criterion and the post-processing approach are complementary. The former detects whether the discrete family has retained the correct amount of information at scale $\varepsilon$; the latter explains how such information can then be extracted to obtain a posteriori accuracy. 

Finally, the lower-bound mechanism proved in Proposition~\ref{prop:lower_estimate} is even more flexible. It does not depend on monotonicity or on the OSLC per se, but on a uniform approximation estimate on a precursor class with bounded variation. Consequently, whenever a PDE (not necessarily a conservation law) admits a family of pairwise $2\varepsilon$-separated exact states used to prove a related entropy lower bound, and a numerical method approximates that family uniformly, the numerical solution set inherits the same lower-bound scaling up to the approximation tolerance. In particular, for any scheme, including suitable higher-order schemes, for which such a uniform estimate is available, the continuous and discrete problem complexities match at the level of scaling. We can therefore state the following theorem:

\begin{theorem}[$\varepsilon$-Entropy Transfer Principle]
\label{thm:entropy_transfer}
Let $(X,d)$ be a metric space and let $S^T: \mathcal{D} \to X$ be the exact flow of an evolution equation on a set of initial data $\mathcal{D}$. Let $P_{\Delta x}$, $S^N_{\Delta x}$, and $I$ denote the projection, discrete evolution, and interpolation operators respectively, defining the numerical solution set $I\big(S^N_{\Delta x}(\mathcal{D}_{\Delta x})\big)$, where $\mathcal{D}_{\Delta x} = P_{\Delta x}(\mathcal{D})$. Suppose there exists a subset $\mathcal{D}^* \subset \mathcal{D}$ satisfying:
\begin{enumerate}
    \item \emph{Continuous separated targets:} There exist a threshold $\varepsilon_0 > 0$ and a function $C: (0, \varepsilon_0] \to [1, \infty)$ such that, for every $\varepsilon \in (0, \varepsilon_0]$, there is a family of exact solutions $F_\varepsilon \subset S^T(\mathcal{D}^*)$, with cardinality bounded from below by $|F_\varepsilon| \ge C(\varepsilon)$, satisfying
    \[
    d(v,w) > 2\varepsilon \qquad \text{for all distinct } v, w \in F_\varepsilon.
    \]
    
    \item \emph{Uniform approximation:} The numerical method uniformly approximates the exact solution over $\mathcal{D}^*$ with a finite supremum of the approximation error $\delta(\mathcal D^*) \to 0$ when $\Delta x \to 0$:
    \[
    \sup_{d \in \mathcal{D}^*} d\Big(S^T(d), I\big(S^N_{\Delta x}(P_{\Delta x}(d))\big)\Big) \le \delta(\mathcal D^*),
    \qquad T=N\Delta t.
    \]
\end{enumerate}

Then, for any $\alpha \in (0,1]$ and $\varepsilon$ such that
\[
\frac{\delta(\mathcal D^*)}{\alpha} \le \varepsilon \le \frac{\varepsilon_0}{1+2\alpha},
\]
we have
\[
\mathcal{H}_\varepsilon\Big(I\big(S^N_{\Delta x}(\mathcal{D}_{\Delta x})\big) \mid X\Big) \ge \log_2 C\big((1+2\alpha)\varepsilon\big).
\]
\end{theorem}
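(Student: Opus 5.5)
The plan is to repeat the argument of Proposition~\ref{prop:lower_estimate} in the abstract metric setting, with hypothesis 1 playing the role of Lemma~\ref{lem:continuous_separated} and hypothesis 2 playing the role of Lemma~\ref{lem:uniform_tv_precursors} combined with Kuznetsov's estimate.

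\emph{Setting up the separated family.} Fix $\alpha\in(0,1]$ and $\varepsilon$ in the admissible range, and set $\tilde\varepsilon:=(1+2\alpha)\varepsilon$. The upper constraint $\varepsilon\le \varepsilon_0/(1+2\alpha)$ gives $\tilde\varepsilon\in(0,\varepsilon_0]$. Hypothesis 1 then supplies a family $F_{\tilde\varepsilon}\subset S^T(\mathcal D^*)$ that is strictly $2\tilde\varepsilon$-separated and has $|F_{\tilde\varepsilon}|\ge C(\tilde\varepsilon)$. For each $v\in F_{\tilde\varepsilon}$ I choose a precursor $d_v\in\mathcal D^*$ with $S^T(d_v)=v$; if $F_{\tilde\varepsilon}$ is infinite, this uses the axiom of choice. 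I then set $v_{\Delta x}:=I(S^N_{\Delta x}(P_{\Delta x}(d_v)))$. This lies in $I(S^N_{\Delta x}(\mathcal D_{\Delta x}))$ because $\mathcal D^*\subset\mathcal D$.

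\emph{Separation of the discrete images.} Hypothesis 2 together with $\delta(\mathcal D^*)\le\alpha\varepsilon$ gives $d(v_{\Delta x},v)\le\alpha\varepsilon$. For distinct $v,w\in F_{\tilde\varepsilon}$, the triangle inequality gives
\[
d(v_{\Delta x},w_{\Delta x})\ge d(v,w)-d(v,v_{\Delta x})-d(w,w_{\Delta x})>2(1+2\alpha)\varepsilon-2\alpha\varepsilon=2(1+\alpha)\varepsilon>2\varepsilon.
\]
Two consequences follow. First, $v\mapsto v_{\Delta x}$ is injective. Second, the image family is strictly $2\varepsilon$-separated.

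\emph{Passing to covering numbers.} Any set of diameter at most $2\varepsilon$ contains at most one point of a strictly $2\varepsilon$-separated family. Hence every $\varepsilon$-cover of $I(S^N_{\Delta x}(\mathcal D_{\Delta x}))$ has at least $|F_{\tilde\varepsilon}|\ge C((1+2\alpha)\varepsilon)$ members. Taking $\log_2$ gives the claim. If the numerical set is not totally bounded, $N_\varepsilon$ is $+\infty$ and the inequality holds trivially.

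I expect no real obstacle. The only care needed is to use the diameter-based definition of an $\varepsilon$-cover, so that strict separation $>2\varepsilon$ is exactly what is required, and to check that $\tilde\varepsilon\le\varepsilon_0$ so that hypothesis 1 applies.
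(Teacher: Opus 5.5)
Your proposal is correct and matches the paper's proof: rescale to $\tilde\varepsilon=(1+2\alpha)\varepsilon$, take precursors in $\mathcal D^*$ of the elements of $F_{\tilde\varepsilon}$, and use the uniform error bound $\delta(\mathcal D^*)\le\alpha\varepsilon$ with the triangle inequality to get a strictly $2\varepsilon$-separated numerical family, whence $N_\varepsilon\ge|F_{\tilde\varepsilon}|\ge C(\tilde\varepsilon)$. Your additional remarks (using diameter-based covers, injectivity of $v\mapsto v_{\Delta x}$, and the non-totally-bounded case) are careful bookkeeping that the paper leaves implicit.
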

\begin{proof}
The argument is analogous to that of Proposition~\ref{prop:lower_estimate}. Set \(\tilde{\varepsilon}:=(1+2\alpha)\varepsilon\). By uniform approximation, each \(v\in F_{\tilde{\varepsilon}}\) has a numerical counterpart within distance at most \(\alpha\varepsilon\), so the reconstructed family is \(2\varepsilon\)-separated in \(X\). Hence \(N_\varepsilon\big(I(S^N_{\Delta x}(\mathcal D_{\Delta x}))\mid X\big)\ge |F_{\tilde{\varepsilon}}|\ge C(\tilde{\varepsilon})\).
\end{proof}
\subsection{Future directions}
Several concrete directions for future research emerge. While the lower bound mechanism may be extended to other schemes (provided a uniform approximation estimate on a suitable precursor class is available), establishing a tight upper bound for second- or higher-order methods remains an open challenge. The upper estimate derived here inherently relies on the discrete OSLC, which is a hallmark of first-order monotone methods. Since higher-order schemes circumvent the strict OSLC to achieve better accuracy, new tools are required to bound the metric complexity of their numerical solution sets.

The present analysis strongly depends on the uniform convexity of the flux function, which dictates the regularizing properties of both the PDE and its discrete approximations. Extending our entropy estimates to scalar conservation laws with general, non-convex fluxes would provide a numerical counterpart to the compactness results established by Ancona, Glass, and Nguyen \cite{AON4}. A natural next question is also whether the same information-theoretic viewpoint can be extended to scalar balance laws  \cite{AON1}.

The information-theoretic perspective should also be extended to other classes of nonlinear PDEs, such as Hamilton--Jacobi equations. The $\varepsilon$-entropy for Hamilton--Jacobi equations has already been investigated (see, e.g., \cite{AON3}), but the numerical counterpart remains largely unexplored. In this
setting, as for conservation laws, one may ask which convergent schemes preserve
the correct entropy scaling over a meaningful range of observation scales, and
which schemes lose part of the metric complexity of the exact solution family.

From a computational standpoint, the theoretical guarantee that a scheme is ``high-resolution'' in Lax's sense \cite{Lax78} strongly motivates further algorithmic development. In particular, it suggests studying how high-resolution discrete outputs may be combined with suitable a posteriori post-processing techniques to exploit the preserved structural complexity and improve the quality of the reconstructed solution; see, for instance, \cite{BCS,NT,Tadmor89} for representative examples of post-processing in related contexts. 
 
Finally, a more speculative but potentially fruitful direction concerns data-driven models. For instance, one may ask whether a neural network (NN) trained on a structured family of initial data, for example piecewise constant functions, can still approximate the exact solutions on the same family and, under suitable additional generalization assumptions, for more general admissible data. If the training family has the same $\varepsilon$-entropy scaling as the full admissible class, and if the NN satisfies a suitable uniform approximation estimate on that family, then one may expect the corresponding output set to retain the same information-theoretic resolution, at least up to the approximation tolerance. In this sense, even without explicitly solving the equation, a suitably trained NN could still provide approximations that remain close to the exact solution at the corresponding observation scale. Future research in the systematic study of metric entropy in numerical analysis may thus serve not only as a diagnostic tool, but also as a criterion for designing novel numerical schemes.

\appendix
\section{Proofs of Auxiliary Results}
\addcontentsline{toc}{section}{Appendix Auxiliary proofs and technical lemmas}
\begin{proof} [Proof of Theorem \ref{lax_thm}]

By the definition of $\varepsilon$-entropy, there exist $\tilde N= 2^{\mathcal{H}_{\delta(\mathcal D)}(I(S_{\Delta x}^{N}(\mathcal D_{\Delta x})) \mid X)}$ balls in $X$ of radius $\delta(\mathcal D)$, centered at $\{u_1, u_2, \dots, u_{\tilde N}\}$, which cover the set of numerical reconstructions $(I(S_{\Delta x}^{N}(\mathcal D_{\Delta x}))$. Thus, for any $v \in(I(S_{\Delta x}^{N}(\mathcal D_{\Delta x})) $, there exists an index $j \in \{1, \dots, \tilde N\}$ such that:
\begin{equation}\label{eq:proof1}
\| v - u_j \| \le \delta(\mathcal D).
\end{equation}
Now, consider an arbitrary exact solution $s \in S^T(\mathcal D)$. By definition, there exists a datum $d \in \mathcal{D}$ such that $s = S^T(d)$. Let $v = I(S^N_{\Delta x}(u_0))$, $u_0=P_{\Delta x}(d)$, be the corresponding numerical reconstruction generated by the projection of the same initial datum. According to our definition of the error in \eqref{eq:delta_def}, we have:
\begin{equation}\label{eq:proof2}
\| s - v \| \le \delta(\mathcal D).
\end{equation}
By the triangle inequality, we can estimate the distance between the exact solution $s$ and the center $u_j$ that covers its numerical counterpart $v$:
\[
\| s - u_j \| \le \| s - v \| + \| v - u_j \|.
\]
Substituting the inequalities from \eqref{eq:proof1} and \eqref{eq:proof2}, we obtain:
$\| s - u_j \| \le \delta(\mathcal D) + \delta(\mathcal D) = 2\delta(\mathcal D).$ This shows that every element of the exact solution set $\mathcal{S}^T$ is contained within a ball of radius $2\delta(\mathcal D)$ centered at one of the $\tilde N$ points $\{u_j\}$. Consequently, the collection of $\tilde N$ balls of radius $2\delta(\mathcal D)$ forms a valid covering for $\mathcal{S}^T$. It follows that the minimal number of balls of radius $2\delta(\mathcal D)$ required to cover $S^T(\mathcal D)$, denoted by $N_{2\delta(\mathcal D)}(S^T(\mathcal D))$, satisfies:
\[\mathcal{H}_{2\delta(\mathcal D)}(S^T(\mathcal D)) \mid X)=\log_2N_{2\delta(\mathcal D)}(S^T(\mathcal D)) \le \log_2 \tilde N =\mathcal{H}_{\delta(\mathcal D)}(\mathcal{I}(\mathcal{S}_{\Delta x}^{N}(\mathcal D_{\Delta x}))  \mid X). \]
\end{proof}
\begin{proof}[Proof of Lemma \ref{lem:continuous_separated}]
The existence of the strictly $2\varepsilon$-separated target family follows directly from the combinatorial construction detailed in the proof of Proposition 2.2 in \cite{AON1}. 

Given any $n\in\N$, $h\geq \varepsilon/L_T$, the authors define a finite set $\mathcal{F} \subset \mathcal{A}_{[L_T, Lh, h, (2T|f''(0)|)^{-1}]}$ containing~$2^n$ piecewise-affine profiles parameterized by binary sequences $\iota \in \{-1,1\}^n$. By relating the $L^1$-distance between these profiles to the Hamming distance of their generating sequences, they prove that the maximum number of profiles in $\mathcal{F}$ lying within an $L^1$-distance of $2\varepsilon$ from any given function in $\mathcal{F}$ is bounded by a combinatorial sum $\mathcal{C}(2\varepsilon)$.

To explicitly construct the strictly $2\varepsilon$-separated family $F_{\varepsilon}$, we apply a standard greedy extraction algorithm: we iteratively select a profile from $\mathcal{F}$ to include in $F_{\varepsilon}$ and discard all its neighbors within a closed $2\varepsilon$-ball. Since at most $\mathcal{C}(2\varepsilon)$ elements are discarded at each step, the extraction is guaranteed to produce a separated subset whose cardinality is at least
\[
|F_{\varepsilon}| \ge \frac{2^n}{\mathcal{C}(2\varepsilon)}.
\]

By optimizing the parameters $h$, $n$, for the given $\varepsilon$, and using $L_T \ge \frac{3}{4} L$ (which is intrinsically guaranteed by the upper bound~\eqref{eq:h_param} on the fixed parameter $h$), the ratio bounding the cardinality is shown to satisfy
\[
\frac{2^{ \overline n}}{\mathcal{C}(2\varepsilon)} \ge 2^{\tilde \Gamma^- / \varepsilon},
\] for some $\overline n$.

Finally, the existence of exact entropy precursors $d_{v} \in \mathcal D_{[L,m,M]}$ ensuring that $F_{\varepsilon} \subset S^T(\mathcal D_{[L,m,M]})$ is obtained via the density argument of Proposition 2.1 in \cite{AON1}, which guarantees the full inclusion $\mathcal{A}_{[L_T, Lh, h, (2T|f''(0)|)^{-1}]} \subset S^T(\mathcal D_{[L,m,M]})$.
\end{proof}
\begin{proof}[Proof of Lemma \ref{lem:uniform_tv_precursors}]
We argue by density. Fix $v\in \mathcal{A}_{[L_T, Lh, h, (2T|f''(0)|)^{-1}]}$.
By the proof of the Proposition~2.1 of \cite{AON1}, there exists a sequence
\[
v^n \in \mathcal{A}_{[L_T, 2Lh, h, (2T|f''(0)|)^{-1}]}\cap C^1(\mathbb R)
\]
such that $v^n \to v$ in $L^1(\mathbb R)$, and for each $n$ one can find
$d^n \in \mathcal{D}_{[L,m,M]}$ satisfying $S^T(d^n)=v^n$.

The enlargement from $Lh$ to $2Lh$ is required by the approximation procedure. In the constructive proof, one writes
\[
d^n(x)=w^n(T,-x),
\]
where $w^n$ is the forward entropy solution starting from the reflected terminal datum $v^n(-\cdot)$. The  argument yields a uniform bound on the spatial derivative at time $T$, namely
\begin{equation}\label{uniform_der}
\|w_x^n(T,\cdot)\|_{L^\infty(\mathbb R)}\le C_*,
\end{equation}
where $C_*$ is only dependent of the parameters defining the class $\mathcal{A}_{[L_T, 2Lh, h, (2T|f''(0)|)^{-1}]}$. Since each $d^n$ is supported in $[-L,L]$, it follows that
\[
TV(d^n)
\le \int_{-L}^{L} \bigl| (d^n)'(x)\bigr|\,dx
= \int_{-L}^{L} \bigl| w_x^n(T,-x)\bigr|\,dx
\le 2L\,\|w_x^n(T,\cdot)\|_{L^\infty(\mathbb R)}
\le 2L\,C_*.
\]
Thus, setting
\[
C_{BV}:=2L\,C_*,
\]
we obtain
\[
TV(d^n)\le C_{BV}
\qquad\text{for all }n\in\mathbb N,
\]
with $C_{BV}$ independent of $n$ and $v$, since it is the same for every element in $\mathcal{A}_{[L_T, 2Lh, h, (2T|f''(0)|)^{-1}]}$.

Since $(d^n)$ is uniformly bounded in $BV(\mathbb R)$ and shares a common compact support in $[-L,L]$, the compact embedding $BV([-L,L])\hookrightarrow L^1([-L,L])$ yields, up to a non-relabeled subsequence, a function
\[
d_{v}\in BV(\mathbb R)
\]
such that
\[
d^n\to d_{v}\quad\text{in }L^1(\mathbb R).
\]
By lower semicontinuity of the total variation,
\[
TV(d_{v})\le \liminf_{n\to\infty}TV(d^n)\le C_{BV}.
\]

Moreover, since each $d^n$ belongs to $\mathcal D_{[L,m,M]}$, the limit $d_{v}$ inherits the support, $L^1$, and $L^\infty$ bounds; hence,
\[
d_{v}\in \mathcal D_{[L,m,M]}.
\]

Finally, the $L^1$-continuity of the entropy semigroup gives
\[
S^T(d_{v})
=
\lim_{n\to\infty} S^T(d^n)
=
\lim_{n\to\infty} v^n
=
v.
\]
\end{proof}

\clearpage

\end{document}